\numberwithin{equation}{section}
\theoremstyle{theorem}
\newtheorem{theorem}{Theorem}[section]
\newtheorem{proposition}[theorem]{Proposition}
\newtheorem{lemma}[theorem]{Lemma}
\theoremstyle{definition}
\newtheorem{definition}[theorem]{Definition}
\newtheorem{corollary}[theorem]{Corollary}
\newtheorem{assumption}[theorem]{Assumption}
\newtheorem{remark}[theorem]{Remark}
\newcommand{\ind}[1]{\mathds{1}_{#1}}
\newcommand{\stkout}[1]{\ifmmode\text{\sout{\ensuremath{#1}}}\else\sout{#1}\fi}
\title{Large deviations for fractional volatility models with non-Gaussian volatility driver
}
\author{Stefan Gerhold \and Christoph Gerstenecker \and Archil Gulisashvili}
\begin{document}



\maketitle

\begin{abstract}
	We study stochastic volatility models in which the volatility process is a function  of a continuous fractional stochastic process, 
		which is an integral transform of the solution of an SDE satisfying the Yamada-Watanabe condition. We establish a small-noise large deviation principle for the log-price, and, for a special case of our setup, obtain logarithmic call price
		asymptotics for large strikes.
\end{abstract}
\section{Introduction}\label{sec:introduction}


Recently, there has been a surge of interest in using stochastic Volterra equations
for financial modelling, with asymptotic approximations being a popular subject of research;
see the introductions of \cite{Gu18,Gu19} for many references.
While small-noise large deviations for such equations are well studied for Lipschitz coefficients~\cite{LiWaYaZh17,NuRo00,Zh08,Zh10}, results for processes that involve non-Lipschitz functions
in their dynamics are scarce.
 In the papers~\cite{FoGeSm19}
and~\cite{GeJaRoSh18}, concrete models with finite-dimensional parameter spaces are considered,
whereas~\cite{CePa20,FoZh17,Gu18,Gu19,Gu20_arxiv} study models where volatility is a function of a Gaussian
process. In the present paper, we assume that the volatility process is a function of
\begin{equation}\label{eq: V hat}
\hat{V}_t
	= \int_{0}^{t} K(t, s) U(V_s)\, ds,
\end{equation}
where~$U$ is a continuous non-negative function, assumptions on the kernel~$K$ will be specified below, and $V$ solves a one-dimensional SDE satisfying the Yamada-Watanabe condition. A (semi-)explicit generating function,
as is available  in the rough resp.\ fractional Heston models considered
in~\cite{FoGeSm19,GeJaRoSh18}, is not required.
Also, our process $\hat{V}$ 
is clearly non-Gaussian in general, which sets our results apart from the related papers
with Gaussian drivers mentioned above.
While our setup allows a lot of freedom in choosing the diffusion~$V$ and the other ingredients,
we note that truly rough models are not covered,
because~\eqref{eq: V hat} is a Lebesgue integral and not an integral
w.r.t.\ Brownian motion.
However, the models that we are considering may be rough at $t=0$ (see Remark 4.2).
The stock price is given by
\begin{align}\begin{split}
	dS_t
	& = S_t \sigma(\hat{V}_t) (\bar{\rho}\, dW_t + \rho\, dB_t),
	\quad 0 \leq t \leq T,\\
	 S_0
	& = 1.
\end{split}\end{align}
Here, $ B, W $ are independent standard Brownian motions,  $ \rho \in (-1, 1) $ and $ \bar{\rho} = \sqrt{1 - \rho^2} $. The extension to arbitrary $S_0>0$ is straightforward.
We now specify the conditions under which our main results, Theorems~\ref{thm:ldp for log-price (with W_T)}
and~\ref{thm:ldp for log-price} below, are valid.
Assumptions~\ref{ass:kernel definition}, \ref{ass:definition of u}
and~\ref{ass:assumptions for cir type diffusion}
 are in force throughout the paper. We note that the model
 defined in Section~2 of~\cite{BaDe20} is a special case of our model,
 but the aim of that paper is quite different from ours.

\begin{assumption}\label{ass:kernel definition}
	Let $ K $ be a kernel on $ [0, T]^2 $ satisfying the following conditions:
	\begin{enumerate}[label={\normalfont (\alph*)},ref={\normalfont \alph*}]
		\item
		\begin{align}\label{eq:kernel in L^2}
			\sup_{t \in [0, T]} \int_{0}^{T} K(t, s)^2\, ds
			< \infty.
		\end{align}
		
		\item The modulus of continuity of the kernel $ K $ in the space $ L^2[0, T] $ is defined as follows:
		\begin{align}\label{eq:moc of kernel}
			M(h)
			= \sup_{\{ t_1, t_2 \in [0, T]: |t_1 - t_2| \leq h \}} \int_{0}^{T} |K(t_1, s) - K(t_2, s)|^2\, ds,
			\quad 0 \leq h \leq T.
		\end{align}
		There exist constants $ c > 0 $ and $ r > 0 $ such that
		\begin{align}\label{eq:estimate for kernel's modulus of continuity}
			M(h)\leq c h^r
		\end{align}
		for all $ h \in [0, T] $.
		
		\item \label{it:condition (a) from [Archil 2018]}$ K(t, s) = 0 $ for all $ 0 \leq t < s \leq T $.
	\end{enumerate}
\end{assumption}

Then, $K$ is a Volterra kernel in the sense of \cite{Gu18} resp.\ \cite{Gu19}.
Of course, these conditions have been used earlier; e.g.,
(b) and~(c) are part of the definition of a Volterra type Gaussian
process in~\cite{Hu03,Hu03a}.
It is a standard fact that the associated integral operator
\begin{align}\label{eq:op K}
		\mathcal{K}(h)(t)
		= \int_{0}^{T} K(t, s) h(s)\, ds
	\end{align}
is  compact from $ L^2[0, T] $ into $ C[0, T] $; 
see e.g.\ Lemma~2 of \cite{Gu18} for a proof.
A standard example of a kernel satisfying Assumption~\ref{ass:kernel definition}
is the fractional kernel $\Gamma(H+\tfrac12)^{-1}(t-s)^{H-1/2},$
$0\leq s\leq t,$ with Hurst parameter $H\in(0,1).$ We note that~$\Gamma$ denotes the gamma function here, whereas later
we will use the letter~$\Gamma$ for the solution map of the
ODE~\eqref{eq:ode for v} below.

\begin{definition}\label{def:locally omega continuity}
	Let $ \omega $ be an increasing modulus of continuity on $ [0, \infty) $, that is $ \omega : \mathbb{R}_+ \to \mathbb{R}_+ $ is an increasing function such that $ \omega(0) = 0 $ and $ \lim\limits_{s \to 0} \omega(s) = 0 $. A function $ h $ defined on $ \mathbb{R} $ is called locally $ \omega $-continuous, if for every $ \delta > 0 $ there exists a number $ L(\delta) > 0 $ such that for all $ x, y \in [-\delta, \delta] $
	\begin{align}\label{eq:estimate w-continuous}
		|h(x) - h(y)|
		\leq L(\delta) \omega(|x - y|).
	\end{align}
\end{definition}

\begin{assumption}\label{ass:definition of u}
   The function $ U :\mathbb{R} \to [0, \infty) $ is continuous, and
    $ \sigma $ is a positive function on $ \mathbb{R}^+ $ that is locally $ \omega $-continuous for some modulus of continuity $ \omega $
	 as in Definition~\ref{def:locally omega continuity}.
\end{assumption}
The process $ V $ is assumed to solve the SDE
\begin{align}\begin{split}\label{eq:dV_t}
	dV_t
	&= \bar{b}(V_t)\, dt
	+ \bar{\sigma}(V_t)\, dB_t,
	\quad 0 \leq t \leq T,\\
	V_0
	&= v_0 > 0,
\end{split}\end{align}
where $ \bar{\sigma} $ and $ \bar{b} $ satisfy the Yamada-Watanabe
condition in Assumption~\ref{ass:assumptions for cir type diffusion}
below. A well-known example is the CIR process, where $\bar{\sigma}$ is the square
root function.


\begin{assumption}\label{ass:assumptions for cir type diffusion}
	~\begin{enumerate}[label={\normalfont (R\arabic*)},ref={\normalfont R\arabic*}]
		\item \label{it:(R1) Chiarini Fischer}
		The dispersion coefficient $ \bar{\sigma} : \mathbb{R} \to [0, \infty) $ is locally Lipschitz continuous on $ \mathbb{R} \backslash \{ 0 \} $, has sub-linear growth at $ \infty $, and $ \bar{\sigma}(0) = 0 $, while $ \bar{\sigma}(x) > 0 $ for all $ x \neq 0 $. Moreover, there exists a continuous increasing function $ \gamma : (0, \infty) \to (0, \infty) $ such that
		\begin{align}\label{eq:yamada watanabe condition}
			\int_{0 +}^{\infty} \frac{du}{\gamma(u)^2}
			= \infty
		\end{align}
		and
		\begin{align*}
			|\bar{\sigma}(x) - \bar{\sigma}(y)|
			\leq \gamma(|x - y|)
			\quad \text{for all } x, y \in \mathbb{R},\, x \neq y.
		\end{align*}
		Here, the sub-linear growth at $ \infty $ is understood in the sense that for every $ x_0 $ there exists a $ \mu $ such that for all $ x > x_0 $ we have
		\begin{align*}
			|\bar{\sigma}(x)|^2
			\leq \mu (1 + |x|^2).
		\end{align*}
		
		\item \label{it:(R2) Chiarini Fischer}
		The drift coefficient $ \bar{b} : \mathbb{R} \to \mathbb{R} $ is locally Lipschitz continuous, has sub-linear growth at $ \infty $, and $ \bar{b}(0) > 0 $.
	\end{enumerate}
\end{assumption}


Next, introducing a small-noise parameter~$\varepsilon>0$, we define the scaled version $ V^{\varepsilon} $ of the process $ V $ by
\begin{align}\label{eq:scaled cir type process}\begin{split}
	dV^{\varepsilon}_t
	& = \bar{b}(V_t^{\varepsilon})\, dt
	+ \sqrt{\varepsilon} \bar{\sigma}(V_t^{\varepsilon})\, dB_t,\\
	V_0^{\varepsilon}
	& = v_0 > 0,
\end{split}\end{align}
and the scaled stock price by
\begin{align}
	dS^{\varepsilon}_t
	= \sqrt{\varepsilon} S^{\varepsilon}_t \sigma(\hat{V}^{\varepsilon}_t) (\bar{\rho}\, dW_t + \rho\, dB_t).
\end{align}
Here, we write $ \hat{V}^{\varepsilon} $ for the process
\begin{align}\label{eq:V hat epsilon}
	\hat{V}^{\varepsilon}_t
	= \int_{0}^{t} K(t, s) U(V_s^{\varepsilon})\, ds.
\end{align}
The scaled log-price process $ X^{\varepsilon} = \log S^{\varepsilon} $, which is the process of interest
for our large deviations analysis, is now given by
\begin{align}\begin{split}
	dX^{\varepsilon}_t
	& = \sqrt{\varepsilon} \sigma(\hat{V}^{\varepsilon}_t)(\bar{\rho}\, dW_t + \rho\, dB_t)
	- \frac{1}{2} \varepsilon \sigma(\hat{V}^{\varepsilon}_t)^2\, dt,
	\quad 0 \leq t \leq T,\\
	X_0^\varepsilon
	& = 0,
\end{split}\end{align}
and the integral representation is as follows:
\begin{align}\label{eq:scaled log-price}
	X_t^{\varepsilon}
	= - \frac{1}{2} \varepsilon \int_{0}^{t} \sigma(\hat{V}_s^{\varepsilon})^2\, ds
	+ \sqrt{\varepsilon} \int_{0}^{t} \sigma(\hat{V}_s^{\varepsilon})\, d(\bar{\rho} W_s + \rho B_s),
	\quad 0 \leq t \leq T.
\end{align}


\begin{definition}\label{def:definition of integral operators}
    In addition to~$\mathcal K$ from~\eqref{eq:op K}, we
	 define the integral operators
	\begin{align*}
		\hat{\cdot} &: C[0, T] \to C[0, T],\\
		\check{\cdot} &: H_0^1[0, T] \to C[0, T],
	\end{align*}
	by
	\begin{align}
		\label{eq:f hat (within def)}
		\hat{f}(t)
		& = \int_{0}^{t} K(t, s)U(f(s))\, ds,
		\quad t \in [0, T],\\
		\label{eq:f check (within def)}
		\check{g}(t)
		& = \int_{0}^{t} K(t, s) U(v(s))\, ds,
		\quad t \in [0, T],
	\end{align}
	where $ v $ is the solution of the ODE
	\begin{align}\label{eq:ode for v}
		\dot{v}
		= \bar{b}(v)
		+ \bar{\sigma}(v) \dot{g},
		\quad v(0) = v_0.
	\end{align}
\end{definition}


Clearly, we have  $ \check{g} = \hat{v} $, where $ v $ solves the ODE \eqref{eq:ode for v}.
Moreover,  $\hat{f} = \mathcal{K}(U \circ f)$ and
		$\check{g} = \mathcal{K}(U \circ \Gamma(g))$,
	where $ \Gamma$ maps $g$ to the solution of~\eqref{eq:ode for v}.	
	By Assumption~\ref{ass:kernel definition} the integral operators of Definition~\ref{def:definition of integral operators} are well-defined. In fact, for our kernel $ K $, we get that $ \mathcal{K} : L^2[0, T] \to C[0, T] $.  Note that for $ h \in H_0^1[0, T] $, we have $ h \in C[0, T] $. 
	Further, for $ f \in H_0^1[0, T] $ we have $ U \circ f \in L^2[0, T] $ and for $ g \in H_0^1[0, T] $ we have $ U \circ v \in L^2[0, T] $. This can be easily seen using the fact that $ U $ is continuous and the input functions are continuous on a bounded interval and hence bounded themselves.


We can now state our main results.

\begin{theorem}\label{thm:ldp for log-price (with W_T)}	
	The family $ X^{\varepsilon}_T $ satisfies the small-noise large deviation principle (LDP) with speed $ \varepsilon^{-1} $ and good rate function $ I_T $ given by
	\begin{align}\begin{split}
		I_T(x)
		& = \inf_{f \in H_0^1} \Big[ \frac{T}{2} \Big(\frac{x - \rho \langle \sigma(\mathcal{K}(U \circ \Gamma(f))), \dot{f} \rangle}{\bar{\rho} \sqrt{\langle \sigma(\mathcal{K}(U \circ \Gamma(f)))^2, 1\rangle}}\Big)^2 + \frac{1}{2} \langle \dot{f}, \dot{f} \rangle \Big]\\
		& = \inf_{f \in H_0^1} \Bigg[ \frac{T}{2} \bigg(\frac{x - \rho \int_{0}^{T} \sigma(\int_{0}^{t} K(t, s) U(\Gamma(f)(s))\, ds) \dot{f}(t)\, dt}{\bar{\rho} \sqrt{\int_{0}^{T} \sigma(\int_{0}^{t} K(t, s)U(\Gamma(f)(s))\, ds)^2\, dt}}\bigg)^2 + \frac{1}{2} \int_{0}^{T} \dot{f}(t)^2\, dt \Bigg]
	\end{split}\end{align}
	for all $ x \in \mathbb{R} $, wherever this expression is finite. The validity of the LDP means that for every Borel subset $ \mathcal{A} $ of $ \mathbb{R} $, the following estimate holds, where  $\mathcal{A}^{\circ}$ and $\bar{\mathcal{A}}$ denote the interior resp.\ the closure of~$\mathcal{A}$:
	\begin{align}
		- \inf_{x \in \mathcal{A}^{\circ}} I_T(x)
		\leq \liminf_{\varepsilon \searrow 0} \varepsilon \log P(X_T^\varepsilon \in \mathcal{A})
		\leq \limsup_{\varepsilon \searrow 0} \varepsilon \log P(X_T^\varepsilon \in \mathcal{A})
		\leq - \inf_{x \in \bar{\mathcal{A}}} I_T(x).
	\end{align}
\end{theorem}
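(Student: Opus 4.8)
The proof proposal is to establish the LDP for $X_T^\varepsilon$ by a combination of Schilder-type reasoning for the driving pair $(B,W)$, a contraction principle, and a careful treatment of the non-Lipschitz diffusion $V$ via the Yamada–Watanabe machinery.

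The plan is to proceed in the following steps. \emph{Step 1 (LDP for the CIR-type diffusion).} First I would recall that the scaled process $V^\varepsilon$ from~\eqref{eq:scaled cir type process} satisfies a sample-path LDP in $C[0,T]$ with speed $\varepsilon^{-1}$ and rate function $g\mapsto \tfrac12\langle\dot g,\dot g\rangle$ on the set $\{v=\Gamma(g):g\in H_0^1\}$, i.e.\ the good rate function is $v\mapsto\inf\{\tfrac12\int_0^T\dot g(t)^2\,dt:\Gamma(g)=v\}$. This is exactly the content of the results of Chiarini–Fischer (cited via Assumption~\ref{ass:assumptions for cir type diffusion}), and it is the only place where non-Lipschitzness of $\bar\sigma$ enters; the Yamada–Watanabe condition~\eqref{eq:yamada watanabe condition} guarantees pathwise uniqueness and, together with the sub-linear growth and $\bar b(0)>0$, the required exponential tightness and the form of the rate function. \emph{Step 2 (transfer to $\hat V^\varepsilon$).} Next I would push this LDP forward through the continuous map $v\mapsto \mathcal{K}(U\circ v)$ from $C[0,T]$ to $C[0,T]$; continuity of this map is immediate since $U$ is continuous and $\mathcal{K}:L^2[0,T]\to C[0,T]$ is compact (hence bounded and continuous) by Assumption~\ref{ass:kernel definition}. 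By the contraction principle, $\hat V^\varepsilon=\mathcal{K}(U\circ V^\varepsilon)$ satisfies an LDP in $C[0,T]$ with rate function $\psi\mapsto\inf\{\tfrac12\langle\dot g,\dot g\rangle:\mathcal{K}(U\circ\Gamma(g))=\psi\}$, and $\check g=\mathcal{K}(U\circ\Gamma(g))$ is precisely the image point.

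\emph{Step 3 (joint LDP with the independent Brownian motion $W$).} I would then establish a joint LDP for the pair $(\hat V^\varepsilon,\sqrt\varepsilon W)$ in $C[0,T]\times C[0,T]$. Since $W$ is independent of $B$ (hence of $V^\varepsilon$ and $\hat V^\varepsilon$), the joint rate function is the sum: $(\psi,w)\mapsto \inf\{\tfrac12\langle\dot g,\dot g\rangle:\mathcal{K}(U\circ\Gamma(g))=\psi\}+\tfrac12\langle\dot w,\dot w\rangle$, using Schilder's theorem for $\sqrt\varepsilon W$ and the standard fact that the joint law of independent LDP families satisfies the LDP with the sum rate function (exponential tightness of each component gives the full LDP from the weak one). \emph{Step 4 (the functional defining $X_T^\varepsilon$).} Now observe from~\eqref{eq:scaled log-price} that
\begin{align*}
X_T^\varepsilon = -\tfrac12\varepsilon\int_0^T\sigma(\hat V_s^\varepsilon)^2\,ds + \bar\rho\sqrt\varepsilon\int_0^T\sigma(\hat V_s^\varepsilon)\,dW_s + \rho\sqrt\varepsilon\int_0^T\sigma(\hat V_s^\varepsilon)\,dB_s.
\end{align*}
The first term is $O(\varepsilon)$ and will be negligible at speed $\varepsilon^{-1}$. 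The genuinely delicate point is the stochastic integral $\sqrt\varepsilon\int_0^T\sigma(\hat V_s^\varepsilon)\,dB_s$ against the \emph{same} Brownian motion $B$ that drives $V^\varepsilon$: this is not a continuous functional of $(\hat V^\varepsilon,\sqrt\varepsilon W)$, so I would handle it through a change of measure / variational (weak-convergence) argument in the spirit of Boué–Dupuis, or alternatively via a conditioning argument exploiting that, given the path of $B$, the term $\bar\rho\sqrt\varepsilon\int_0^T\sigma(\hat V_s^\varepsilon)\,dW_s$ is Gaussian with mean $0$ and variance $\varepsilon\bar\rho^2\int_0^T\sigma(\hat V_s^\varepsilon)^2\,ds$, while $\rho\sqrt\varepsilon\int_0^T\sigma(\hat V_s^\varepsilon)\,dB_s$ concentrates (after the $\sqrt\varepsilon$ scaling and on the large-deviation scale) around $\rho\int_0^T\sigma(\check g(s))\,\dot g(s)\,ds$ when $V^\varepsilon$ is near $\Gamma(g)$.

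\emph{Step 5 (assembling the rate function).} Putting Steps 3–4 together with the contraction principle applied to the (almost-)continuous map $(\psi,w)\mapsto \bar\rho\int_0^T\sigma(\psi_s)\,dw_s + \rho\int_0^T\sigma(\psi_s)\,\dot g_s\,ds$ (with $g$ the minimal-energy preimage of $\psi$), and optimizing: for a target value $x$, given a fixed drift $g$ (hence $\psi=\check g$) we must choose $w\in H_0^1$ minimizing $\tfrac12\int_0^T\dot w^2$ subject to $\bar\rho\int_0^T\sigma(\check g_s)\dot w_s\,ds = x-\rho\int_0^T\sigma(\check g_s)\dot g_s\,ds$. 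This is a one-constraint quadratic minimization whose solution (Cauchy–Schwarz / Lagrange multiplier) gives $\dot w_s\propto \sigma(\check g_s)$ and minimal value
\begin{align*}
\frac{1}{2}\,\frac{\big(x-\rho\langle\sigma(\check g),\dot g\rangle\big)^2}{\bar\rho^2\,\langle\sigma(\check g)^2,1\rangle},
\end{align*}
which, after multiplying through — note $\langle\sigma(\check g)^2,1\rangle=\int_0^T\sigma^2$ and the factor $T$ in the theorem comes from writing $X_T^\varepsilon$ rather than the whole path — yields the stated $I_T(x)=\inf_{f\in H_0^1}[\tfrac{T}{2}(\cdots)^2+\tfrac12\langle\dot f,\dot f\rangle]$. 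Goodness of $I_T$ (compact sublevel sets) follows from lower semicontinuity plus exponential tightness of $X_T^\varepsilon$, itself a consequence of the exponential tightness established in Steps 1–3 together with standard exponential martingale (BDG-type) estimates controlling the stochastic-integral term.

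\emph{Main obstacle.} The crux of the argument — and where I expect the real work to be — is Step 4: the stochastic integral $\sqrt\varepsilon\int_0^T\sigma(\hat V_s^\varepsilon)\,dB_s$ depends on $B$ both through the integrand (via $V^\varepsilon$, non-Lipschitz in its own noise) and through the integrator, so one cannot simply invoke the contraction principle on a continuous map of Schilder-level objects. Making rigorous the claim that this term "is morally $\rho\int_0^T\sigma(\check g_s)\dot g_s\,ds$ on the large-deviation scale" requires either a Boué–Dupuis variational representation for $X_T^\varepsilon$ — writing $\varepsilon\log\expec[\e^{-\lambda X_T^\varepsilon/\varepsilon}]$ as an infimum over controls and passing to the limit, using the compactness of $\mathcal K$ and continuity of $U,\sigma$ to identify the limiting control problem — or a careful approximation of $\sigma$ and $\hat V^\varepsilon$ by Lipschitz/mollified versions together with exponential-equivalence estimates (Azencott-type). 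The $\bar\rho$-versus-$\rho$ decomposition is what makes the Gaussian conditioning route viable, but uniform-in-$\varepsilon$ control of the conditional variance $\int_0^T\sigma(\hat V_s^\varepsilon)^2\,ds$ away from $0$ and $\infty$ (using positivity and local $\omega$-continuity of $\sigma$, and the fact that $\hat V^\varepsilon$ stays in a compact set on large-probability events) is the technical heart of that approach.
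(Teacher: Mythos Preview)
Your plan has the right overall shape, and you correctly locate the crux, but there is a genuine gap precisely at the point you flag as the main obstacle, and your proposed workarounds remain programmes rather than proofs. The structural issue is that after Step~2 you discard $\sqrt{\varepsilon}B$ and retain only $(\hat V^\varepsilon,\sqrt{\varepsilon}W)$; the integral $\sqrt{\varepsilon}\int_0^T\sigma(\hat V_s^\varepsilon)\,dB_s$ is then irretrievable, since it is not a functional of that pair. Your Step~5 compounds this by proposing to apply a contraction principle to $(\psi,w)\mapsto \bar\rho\int\sigma(\psi)\,dw+\rho\int\sigma(\psi)\,\dot g$ ``with $g$ the minimal-energy preimage of $\psi$'': this conflates the state-space level (where the map in any contraction principle must be defined on the \emph{process paths}) with the rate-function level (where minimizing preimages live). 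No such state-space functional exists, and neither the Bou\'e--Dupuis nor the Azencott route you mention is actually carried out.

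The paper's resolution is direct. First, \emph{retain} $\sqrt{\varepsilon}B$ in the joint LDP: the working object is the triple $(\sqrt{\varepsilon}W_T,\sqrt{\varepsilon}B,\hat V^\varepsilon)$ on $\mathbb{R}\times C[0,T]^2$, obtained from the Chiarini--Fischer LDP for $(\sqrt{\varepsilon}B,V^\varepsilon)$, the continuous map $v\mapsto\mathcal K(U\circ v)$, and independence of $W$. Second, use conditional Gaussianity to replace $\bar\rho\sqrt{\varepsilon}\int_0^T\sigma(\hat V^\varepsilon_s)\,dW_s$ in law by $\bar\rho\big(\int_0^T\sigma(\hat V^\varepsilon_s)^2\,ds\big)^{1/2}\cdot\sqrt{\varepsilon}W_T$, so the $W$-contribution becomes a continuous functional of the scalar $W_T$ and $\hat V^\varepsilon$. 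Third---and this is the device you are missing---handle the $dB$-integral via the \emph{extended} contraction principle (Dembo--Zeitouni, Theorem~4.2.23): approximate it by Riemann sums $\sum_k\sigma(\hat V_{t_k}^\varepsilon)\,\sqrt{\varepsilon}(B_{t_{k+1}}-B_{t_k})$, which \emph{are} continuous in $(\sqrt{\varepsilon}B,\hat V^\varepsilon)$, and verify (i) that these discretizations form an exponentially good approximation of $\hat X_T^\varepsilon$ and (ii) that they converge to the limiting functional uniformly on level sets of the joint rate function. With $B$ retained, both checks reduce to modulus-of-continuity estimates for $\hat V^\varepsilon$ (via Assumption~\ref{ass:kernel definition}) and standard exponential martingale bounds; no further variational representation is needed beyond what is already packaged in Step~1. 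Your Lagrange-multiplier computation of the rate function in Step~5 is correct once the LDP is in place, but getting there requires the discretization step, not a map built from rate-function minimizers.
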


\begin{theorem}\label{thm:ldp for log-price}
	The family of processes $  X^{\varepsilon} $  satisfies the sample path LDP with speed $ \varepsilon^{-1} $ and good rate function $ Q $ given by
	\begin{align}\begin{split}
		Q(g)
		& = \inf_{f \in H_0^1} \Big[ \frac{1}{2} \int_{0}^{T} \Big(\frac{\dot{g}(t) - \rho \sigma(\mathcal{K}(U \circ \Gamma(f))(t)) \dot{f}(t)}{\bar{\rho} \sigma(\mathcal{K}(U \circ \Gamma(f))(t))}\Big)^2\, dt + \frac{1}{2} \int_{0}^{T} |\dot{f}(t)|^2\, dt \Big]\\
		& = \inf_{f \in H_0^1} \Bigg[ \frac{1}{2} \int_{0}^{T} \bigg(\frac{\dot{g}(t) - \rho \sigma(\int_{0}^{t} K(t, s) U(\Gamma(f)(s))\, ds) \dot{f}(t) }{\bar{\rho} \sigma(\int_{0}^{t} K(t, s) U(\Gamma(f)(s))\, ds)}\bigg)^2\, dt + \frac{1}{2} \int_{0}^{T} |\dot{f}(t)|^2\, dt \Bigg]
	\end{split}\end{align}
	for all $ g \in H_0^1[0, T] $, and by $ Q(g) = \infty $, for all $ g \in C[0, T] \backslash H_0^1[0, T] $. The validity of the LDP means that for every Borel subset $ \mathcal{A} $ of $ C[0, T] $, the following estimate holds:
	\begin{align}
		- \inf_{g \in \mathcal{A}^{\circ}} Q(g)
		\leq \liminf_{\varepsilon \searrow 0} \varepsilon \log P(X^\varepsilon \in \mathcal{A})
		\leq \limsup_{\varepsilon \searrow 0} \varepsilon \log P(X^\varepsilon \in \mathcal{A})
		\leq - \inf_{g \in \bar{\mathcal{A}}} Q(g).
	\end{align}
\end{theorem}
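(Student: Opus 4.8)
The plan is to derive the sample-path LDP for $X^{\varepsilon}$ by transporting a joint LDP for the triple $(\hat V^{\varepsilon},\sqrt{\varepsilon}B,\sqrt{\varepsilon}W)$ in $C[0,T]^{3}$ through the map that produces the log-price. I would begin from the small-noise LDP for the Yamada--Watanabe diffusion $V^{\varepsilon}$ in $C[0,T]$: under Assumption~\ref{ass:assumptions for cir type diffusion} this is exactly the framework of Chiarini and Fischer, and the associated rate function is $\phi\mapsto\inf\{\tfrac12\langle\dot f,\dot f\rangle : \phi=\Gamma(f)\}$. Since that LDP is proved by a Girsanov change of measure, a standard variant of the same argument — the shift of $B$ costs $\tfrac12\langle\dot f,\dot f\rangle$, and under the shifted measure $V^{\varepsilon}\to\Gamma(f)$ while $\sqrt{\varepsilon}B\to f$ — yields the joint LDP for $(V^{\varepsilon},\sqrt{\varepsilon}B)$, with good rate function equal to $\tfrac12\langle\dot f,\dot f\rangle$ when $f\in H_0^1$ and $\phi=\Gamma(f)$, and $+\infty$ otherwise. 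As $W$ is independent of $(V^{\varepsilon},B)$ and $\sqrt{\varepsilon}W$ obeys Schilder's theorem, the triple $(V^{\varepsilon},\sqrt{\varepsilon}B,\sqrt{\varepsilon}W)$ then satisfies the LDP with good rate function $\tfrac12\langle\dot f,\dot f\rangle+\tfrac12\langle\dot w,\dot w\rangle$ on the set where $f,w\in H_0^1$ and $\phi=\Gamma(f)$. Finally, $v\mapsto\mathcal K(U\circ v)$ is continuous from $C[0,T]$ into itself (continuity of $U$ on compact sets together with the continuity of $\mathcal K:L^{2}[0,T]\to C[0,T]$), so the contraction principle upgrades this to the LDP for $(\hat V^{\varepsilon},\sqrt{\varepsilon}B,\sqrt{\varepsilon}W)$ with good rate function $\mathcal I(\psi,f,w)=\tfrac12\langle\dot f,\dot f\rangle+\tfrac12\langle\dot w,\dot w\rangle$ when $f,w\in H_0^1$ and $\psi=\mathcal K(U\circ\Gamma(f))$, and $\mathcal I=+\infty$ otherwise.

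The obstacle is that $X^{\varepsilon}$ is \emph{not} a continuous functional of $(\hat V^{\varepsilon},\sqrt{\varepsilon}B,\sqrt{\varepsilon}W)$, because the It\^o integral $\int_{0}^{\cdot}\sigma(\hat V^{\varepsilon}_{s})\,d(\bar\rho\sqrt{\varepsilon}W_{s}+\rho\sqrt{\varepsilon}B_{s})$ is not a continuous function of the paths of $\hat V^{\varepsilon}$ and of $m^{\varepsilon}:=\bar\rho\sqrt{\varepsilon}W+\rho\sqrt{\varepsilon}B$. To get around this I would introduce, for a partition $0=t_0^{n}<\cdots<t_n^{n}=T$ whose mesh tends to $0$, the processes $X^{\varepsilon,n}_{t}=\sum_{i}\sigma(\hat V^{\varepsilon}_{t_{i-1}^{n}})(m^{\varepsilon}_{t_i^{n}\wedge t}-m^{\varepsilon}_{t_{i-1}^{n}\wedge t})$; each $X^{\varepsilon,n}$ is a continuous image of $(\hat V^{\varepsilon},\sqrt{\varepsilon}B,\sqrt{\varepsilon}W)$, hence satisfies the LDP with a good rate function $Q_{n}$ obtained by the contraction principle from $\mathcal I$. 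Solving the defining relation for the $W$-component (which is possible because $\sigma>0$) shows that $Q_{n}(g)=\inf_{f\in H_0^1}[\tfrac12\langle\dot f,\dot f\rangle+\tfrac12\int_{0}^{T}((\dot g(t)-\rho\sigma_{f}^{n}(t)\dot f(t))/(\bar\rho\sigma_{f}^{n}(t)))^{2}\,dt]$ for $g\in H_0^1$ and $+\infty$ otherwise, where $\sigma_{f}^{n}(t)=\sigma(\mathcal K(U\circ\Gamma(f))(t_{i-1}^{n}))$ on $(t_{i-1}^{n},t_{i}^{n}]$. Letting the mesh go to $0$ and using continuity of $\mathcal K(U\circ\Gamma(f))$, the step functions $\sigma_{f}^{n}$ converge uniformly to $\sigma(\mathcal K(U\circ\Gamma(f))(\cdot))$, so $Q_{n}\to Q$ in the sense needed for the transfer theorem below, and the limit is precisely the rate function in the statement.

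The technical heart is to show that $(X^{\varepsilon,n})_{n}$ is an \emph{exponentially good} approximation of $X^{\varepsilon}$, that is, $\lim_{n\to\infty}\limsup_{\varepsilon\searrow0}\varepsilon\log P(\| X^{\varepsilon}-X^{\varepsilon,n}\|_{C[0,T]}>\delta)=-\infty$ for every $\delta>0$, and that $X^{\varepsilon}$ is exponentially tight in $C[0,T]$. For both I would localise on the events $E_{R}=\{\|\hat V^{\varepsilon}\|_{C[0,T]}\le R\}$, whose complements are exponentially negligible as $R\to\infty$ since $\hat V^{\varepsilon}$ is exponentially tight (established above). On $E_{R}$ the path $\sigma(\hat V^{\varepsilon})$ is bounded, so the neglected drift term $-\tfrac12\varepsilon\int_{0}^{\cdot}\sigma(\hat V^{\varepsilon}_{s})^{2}\,ds$ is uniformly $\Oh(\varepsilon)$ and thus exponentially negligible, and $\sigma$ there satisfies $|\sigma(x)-\sigma(y)|\le L(R)\,\omega(|x-y|)$; combining this with Assumption~\ref{ass:kernel definition}(b) — which bounds the $L^{2}$-modulus of continuity of $t\mapsto K(t,\cdot)$ and hence, through Cauchy--Schwarz and the continuity of $U$, the modulus of continuity of $\hat V^{\varepsilon}$ — yields a \emph{deterministic} bound on $\sup_{|s-t|\le\mathrm{mesh}}|\sigma(\hat V^{\varepsilon}_{s})-\sigma(\hat V^{\varepsilon}_{t})|$ that tends to $0$ with the mesh. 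On $E_{R}$ the error $X^{\varepsilon}-X^{\varepsilon,n}$ is, up to this negligible drift, a continuous martingale whose quadratic variation is at most the square of that deterministic quantity times $T$, and the exponential martingale inequality $P(\sup_{t}|N_{t}|>\delta,\ \langle N\rangle_{T}\le\eta)\le 2\exp(-\delta^{2}/(2\eta))$ then gives the stated uniform-in-$\varepsilon$ bound once the mesh is fine enough (letting $R\to\infty$ afterwards). The same localisation, together with a Burkholder--Davis--Gundy estimate for the time increments of $\int_{0}^{\cdot}\sigma(\hat V^{\varepsilon}_{s})\,dm^{\varepsilon}_{s}$, yields the exponential tightness of $X^{\varepsilon}$.

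Combining these ingredients, the transfer principle for exponentially good approximations (as in the monograph of Dembo and Zeitouni) promotes the LDPs for the $X^{\varepsilon,n}$ to the LDP for $X^{\varepsilon}$ in $C[0,T]$ with good rate function $Q$; in particular $Q$ is automatically lower semicontinuous with compact sublevel sets, and the two displayed expressions for $Q$ coincide after unwinding the definitions of $\mathcal K$, $U$ and $\Gamma$. I expect the main obstacle to be the estimate of the third paragraph: because the log-price is built from It\^o integrals, the contraction principle cannot be applied to $X^{\varepsilon}$ directly, and the delicate part is to make the approximation error exponentially small \emph{uniformly in $\varepsilon$} — this is precisely where the kernel regularity of Assumption~\ref{ass:kernel definition}(b) and the local $\omega$-continuity of $\sigma$ of Assumption~\ref{ass:definition of u} are used, after the localisation that converts the probabilistic oscillation bounds into deterministic ones.
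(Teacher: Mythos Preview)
Your strategy is the paper's strategy: build the joint LDP for $(\hat V^{\varepsilon},\sqrt{\varepsilon}B,\sqrt{\varepsilon}W)$, discretise the It\^o integral to get continuous functionals, and invoke the extended contraction principle (Theorem~4.2.23 in Dembo--Zeitouni). The paper organises the exponential approximation via a stopping time rather than a hard localisation event, but the content is the same.

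There is, however, a genuine slip in your localisation step. On $E_{R}=\{\|\hat V^{\varepsilon}\|_{C[0,T]}\le R\}$ you do get boundedness of $\sigma(\hat V^{\varepsilon})$ and the local $\omega$-continuity bound $|\sigma(x)-\sigma(y)|\le L(R)\,\omega(|x-y|)$, but you do \emph{not} get a deterministic modulus of continuity for $\hat V^{\varepsilon}$ itself. The Cauchy--Schwarz estimate from Assumption~\ref{ass:kernel definition}(b) reads
\[
  |\hat V^{\varepsilon}_{t}-\hat V^{\varepsilon}_{s}|
  \le M(|t-s|)^{1/2}\,\|U(V^{\varepsilon})\|_{L^{2}[0,T]},
\]
and the right-hand factor depends on $V^{\varepsilon}$, not on $\hat V^{\varepsilon}$; the event $E_{R}$ gives no control over it (in general there is no continuous inverse from $\hat V^{\varepsilon}$ back to $U(V^{\varepsilon})$). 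Consequently the quadratic variation of the error martingale is not bounded deterministically on $E_{R}$, and your exponential martingale inequality does not apply as stated. The fix is easy and is exactly what the paper does: localise on $\sup_{t}|U(V^{\varepsilon}_{t})|$ (or equivalently on $\|V^{\varepsilon}\|_{C[0,T]}$), using the LDP for $V^{\varepsilon}$ and the contraction principle to make the complement exponentially negligible. On $\{\|V^{\varepsilon}\|\le R\}$ both $\|\hat V^{\varepsilon}\|$ and the increment $|\hat V^{\varepsilon}_{t}-\hat V^{\varepsilon}_{s}|$ are deterministically controlled, and the rest of your argument goes through. A second point to tighten: for the extended contraction principle you need $\Phi_{n}\to\Phi$ \emph{uniformly} over sublevel sets of $\mathcal I$, not just $Q_{n}\to Q$ pointwise; the paper secures this (Lemma~\ref{lem:dembo zeitoungi (4.2.24) for path space}) by showing that $\{\mathcal K(U\circ\Gamma(f)):\tfrac12\langle\dot f,\dot f\rangle\le\alpha\}$ is precompact in $C[0,T]$, which in turn uses the a~priori bound on $\Gamma(f)$ from the control ODE.
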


The structure of this paper is as follows. In Section~\ref{sec:small-noise ldps}, we  recall small-noise large deviations for SDEs satisfying the Yamada-Watanabe condition.  In Section~\ref{sec:small-noise proofs}, we prove the main results, i.e.\ the small-noise LDP for the log-price. In Section~\ref{sec:strike} we specialize
our model to obtain a convenient scaling property, and obtain large-strike
asymptotics for call prices from our small-noise LDP.
As mentioned above, Assumptions~\ref{ass:kernel definition}, \ref{ass:definition of u}
and~\ref{ass:assumptions for cir type diffusion} are supposed to be satisfied throughout the rest of the paper.
\section{LDPs for the driving processes}
\label{sec:small-noise ldps}

\subsection{Sample path LDP for the diffusion}

We apply a result of~\cite{ChFi14}, which is based on a representation formula for
functionals of Brownian motion obtained
in~\cite{BoDu98}, to obtain an LDP for $(\sqrt{\varepsilon}B, V^{\varepsilon}) $.
While the Yamada-Watanabe condition from Assumption~\ref{ass:assumptions for cir type diffusion}
covers virtually all one-dimensional diffusions that have been suggested in financial modelling,
we note that Assumption~\ref{ass:assumptions for cir type diffusion} could still be weakened,
if desired, e.g.\ by inspecting the proof of Theorem~4.3 in~\cite{BoDu98}.

If assumptions (H1)--(H6) of \cite{ChFi14} hold, then the family of processes $ (\sqrt{\varepsilon} B, V^{\varepsilon}). $ which satisfy the two-dimensional SDE
\begin{align}
	\begin{pmatrix}
		\sqrt{\varepsilon} dB_t\\
		dV_t^{\varepsilon}
	\end{pmatrix}
	=
	\begin{pmatrix}
		0\\
		\bar{b}(V_t^{\varepsilon})
	\end{pmatrix}
	dt
	+ \sqrt{\varepsilon}
	\begin{pmatrix}
		1\\
		\bar{\sigma}(V_t^{\varepsilon})
	\end{pmatrix}
	dB_t,
\end{align}
admits an LDP due to Theorem~1 in \cite{ChFi14}.
For $ V^{\varepsilon} $, (H1)--(H6) have been checked in \cite[pp.~1143--1144]{ChFi14}. For $ (\sqrt{\varepsilon}B, V^{\varepsilon}) $, the proofs are similar. The assumptions (H1)--(H3) are clearly satisfied. Let us check condition (H4), namely unique solvability of the control equation~(7) in \cite{ChFi14}.
Here, it is
\begin{align}\label{eq:control equation for two-dimensional problem}
	\begin{pmatrix}
		\varphi_1(t)\\
		\varphi_2(t)
		\end{pmatrix}
	= \begin{pmatrix}
		0\\
		v_0
	\end{pmatrix}
	+ \int_{0}^{t} \begin{pmatrix}
		0\\
		\bar{b}(\varphi_1(s))
	\end{pmatrix}
	ds
	+ \int_{0}^{t} \begin{pmatrix}
		1\\
		\bar{\sigma}(\varphi_1(s))
	\end{pmatrix}
	f(s)\, ds,
\end{align}
where $ f \in L^2[0, T] $ is the control function.
We also have $ \varphi_1, \varphi_2 \in C[0, T] $. It follows that the unique solution of \eqref{eq:control equation for two-dimensional problem} is given by $ \Gamma_{v_0}(f) = \begin{pmatrix}
	\int_{0}^{\cdot} f(s)\, ds\\
	\varphi_2
\end{pmatrix} $, where the function $ \varphi_2 $ is the unique solution of the equation
\begin{align}\label{eq:control ode}
	\varphi_2(t)
	= v_0 + \int_{0}^{t} \bar{b}(\varphi_2(s))\, ds
	+ \int_{0}^{t} \bar{\sigma}(\varphi_2(s)) f(s)\, ds,
	\quad t \in [0, T],
\end{align}
that exists by \cite[Proposition~1]{ChFi14}. 
This establishes condition (H4) in our setting.
Note at this point, that the ODE~\eqref{eq:control ode} above is formulated for $ f \in L^2[0, T] $   to match the notation of \cite{ChFi14}. Alternatively it can also be written, with a $ g \in H_0^1 $,
 and $ \dot{g} $ instead of $ f $, see \eqref{eq:ode for v}.
Condition (H5) for the second component of $ \Gamma_{v_0} $ was checked in \cite[p.~1144]{ChFi14}. For the first component, (H5) is true by the following simple fact.
\begin{lemma}
	The map $ f \mapsto \int_{0}^{\cdot} f(s)\, ds $ is continuous from $ \mathcal{B}_r $ into $ C[0, T] $, where $ \mathcal{B}_r $ is the closed ball of radius $ r>0 $ in $ L^2[0, T] $ endowed with the weak topology.
\end{lemma}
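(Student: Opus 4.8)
\emph{Proof proposal.} The plan is to reduce the claim to a statement about sequences and then combine pointwise convergence with equicontinuity. First I would note that, since $L^2[0,T]$ is a separable Hilbert space, the weak topology restricted to the bounded set $\mathcal{B}_r$ is metrizable; hence it suffices to prove \emph{sequential} continuity, i.e.\ that whenever $f_n \to f$ weakly in $\mathcal{B}_r$, the primitives $F_n(t) := \int_0^t f_n(s)\,ds$ converge to $F(t) := \int_0^t f(s)\,ds$ uniformly on $[0,T]$.

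Pointwise convergence is then immediate: for every fixed $t \in [0,T]$ we have $F_n(t) = \langle f_n, \ind{[0,t]}\rangle$ with $\ind{[0,t]} \in L^2[0,T]$, so $F_n(t) \to \langle f, \ind{[0,t]}\rangle = F(t)$ by the very definition of weak convergence. Next I would establish uniform equicontinuity and uniform boundedness of the family $\{F_n\}_n$: by the Cauchy--Schwarz inequality, $|F_n(t) - F_n(t')| \le \|f_n\|_{L^2[0,T]}\, |t - t'|^{1/2} \le r\,|t-t'|^{1/2}$ for all $n$ and all $t, t' \in [0,T]$, and similarly $|F_n(t)| \le r\sqrt{T}$. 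Since $[0,T]$ is compact, a uniformly bounded, uniformly equicontinuous sequence that converges pointwise converges uniformly: this is the standard consequence of the Arzel\`a--Ascoli theorem (every subsequence has a further subsequence converging uniformly, necessarily to $F$, whence the whole sequence converges uniformly to $F$). Thus $F_n \to F$ in $C[0,T]$, as required.

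There is essentially no real obstacle here; the only point that needs a moment's thought is the passage from topological to sequential continuity, which is why the metrizability of the weak topology on the bounded ball $\mathcal{B}_r$ is isolated as the first step. A slightly slicker but equivalent route would be to observe that $f \mapsto \int_0^\cdot f(s)\,ds$ is a \emph{compact} operator from $L^2[0,T]$ into $C[0,T]$ --- its restriction to the closed unit ball has equicontinuous and uniformly bounded, hence relatively compact, image by Arzel\`a--Ascoli --- and that compact operators map weakly convergent sequences to norm-convergent ones; but since one still needs metrizability to upgrade this to genuine topological continuity, I would present the elementary argument above.
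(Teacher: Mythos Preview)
Your proof is correct. Both you and the paper first reduce to sequential continuity via metrizability of the weak topology on the bounded ball, but then diverge. The paper observes that weak convergence of a bounded sequence in $L^2$ is automatically \emph{uniform on compact subsets} of $L^2$, and that the set $\{\ind{[0,t]} : 0\le t\le T\}$ is compact in $L^2$ (being the continuous image of $[0,T]$ under $t\mapsto\ind{[0,t]}$, since $\|\ind{[0,t]}-\ind{[0,t']}\|_2=|t-t'|^{1/2}$); this yields $\sup_t|\langle f_n-f,\ind{[0,t]}\rangle|\to 0$ in one line. Your route instead establishes the uniform equicontinuity bound $|F_n(t)-F_n(t')|\le r|t-t'|^{1/2}$ directly and then appeals to Arzel\`a--Ascoli. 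The two arguments are closely related---the H\"older-$\tfrac12$ estimate is precisely what makes $\{\ind{[0,t]}\}$ compact in $L^2$---but your version is a bit more self-contained, while the paper's is shorter and highlights the abstract principle (weakly convergent bounded nets converge uniformly on norm-compact sets of test functions). Your closing remark about compact operators is exactly the bridge between the two viewpoints.
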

\begin{proof}
	If $f_n\in\mathcal{B}_r$ converges weakly to~$f$, then the convergence is uniform
	on compact subsets of $ L^2[0, T] $. Since $\{ \mathds{1}_{[0,t]} : 0\leq t\leq T\}$ is compact,
	we have
	\begin{align}
		\sup_{t \in [0, T]} \left| \int_{0}^{t} f(u)\, du - \int_{0}^{t} f_n(u)\, du \right|
		\to 0,
		\quad n \to \infty.
	\end{align}
\end{proof}

The tightness assumption~(H6) can be established as in~\cite{ChFi14}.
The verification, which is based on the sub-linear growth of~$\bar b$
and~$\bar \sigma$ and the uniform moment estimate
in Lemma~A.2 of~\cite{ChFi14}, is found on pp.\ 1137--1138 of~\cite{ChFi14}.
See also Section~4.2 of~\cite{ChFi14}.
%
%
Now, Theorem~1 of~\cite{ChFi14} implies the following assertion.

\begin{theorem}\label{thm:ldp for (B,V)}
	The family of processes $  (\sqrt{\varepsilon} B, V^{\varepsilon}) $  satisfies an LDP in the space $ C[0, T]^2 $  with speed $ \varepsilon^{-1} $ and good rate function $ I : C[0, T]^2 \to [0, \infty] $ given by
	\begin{align}\label{eq:rate function for (B,V) implicit}
		I(\varphi_1, \varphi_2)
		= \inf_{\{ f \in L^2[0, T] :\, \Gamma_{v_0}(f) = \begin{pmatrix}
			\varphi_1\\
			\varphi_2
		\end{pmatrix} \}}
	\frac{1}{2} \int_{0}^{T} f(t)^2\, dt,
	\end{align}
	whenever $ \{ f \in L^2[0, T] : \Gamma_{v_0}(f) = \begin{pmatrix}
	\varphi_1\\
	\varphi_2
	\end{pmatrix} \} \neq \emptyset $, and $ I(\varphi_1, \varphi_2) = \infty $ otherwise. Here, $ \Gamma_{v_0}(f) $ maps $ f $ to the solution of \eqref{eq:control equation for two-dimensional problem}.
\end{theorem}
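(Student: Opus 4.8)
The plan is to deduce the statement directly from Theorem~1 of~\cite{ChFi14}, whose hypotheses (H1)--(H6) have just been verified for the augmented two-dimensional system $(\sqrt{\varepsilon}B, V^{\varepsilon})$ in the discussion preceding the theorem. I would first recall the framework of~\cite{ChFi14}: for an SDE of the form $dY^{\varepsilon} = b(Y^{\varepsilon})\,dt + \sqrt{\varepsilon}\,\sigma(Y^{\varepsilon})\,dB$ driven by a Brownian motion, the Bou\'e--Dupuis variational representation~\cite{BoDu98} combined with a weak-convergence argument yields a sample-path LDP with speed $\varepsilon^{-1}$ and good rate function $\tfrac12\inf\{\|f\|_{L^2}^2 : \Gamma(f)=\varphi\}$, where $\Gamma$ denotes the solution map of the associated controlled (skeleton) ODE, provided the structural conditions (H1)--(H6) of~\cite{ChFi14} hold. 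In our case, $Y^{\varepsilon}$ is the pair $(\sqrt{\varepsilon}B, V^{\varepsilon})$, with coefficient vectors $\binom{0}{\bar b}$ and $\binom{1}{\bar\sigma}$ as in the two-dimensional SDE displayed above.

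Second, I would run through (H1)--(H6) in order. Conditions (H1)--(H3), concerning regularity of the coefficients and the structure needed for the representation, are immediate from Assumption~\ref{ass:assumptions for cir type diffusion} and the explicit affine form of the first coefficient. For (H4), unique solvability of the control equation~\eqref{eq:control equation for two-dimensional problem}, I would exploit that the first coordinate decouples and is forced to equal $\int_0^{\cdot} f$, reducing the problem to the scalar controlled ODE~\eqref{eq:control ode}, whose unique solvability is exactly~\cite[Proposition~1]{ChFi14} --- this is the step where the Yamada--Watanabe condition~\eqref{eq:yamada watanabe condition} is used --- so $\Gamma_{v_0}(f) = \binom{\int_0^{\cdot} f}{\varphi_2}$. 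For (H5), continuity of $\Gamma_{v_0}$ from the weakly topologized balls $\mathcal{B}_r\subset L^2[0,T]$ into $C[0,T]^2$, the second coordinate is handled in~\cite[p.~1144]{ChFi14}, while the first coordinate is precisely the Lemma above, i.e.\ $f\mapsto\int_0^{\cdot}f$ is weakly continuous because $\{\mathds{1}_{[0,t]}:t\in[0,T]\}$ is a compact subset of $L^2[0,T]$. For (H6), the tightness estimates, I would invoke the sub-linear growth of $\bar b$ and $\bar\sigma$ together with the uniform moment bound in~\cite[Lemma~A.2]{ChFi14}, exactly as on pp.~1137--1138 there; the additional coordinate $\sqrt{\varepsilon}B$ is trivially tight and adds nothing.

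Finally, with (H1)--(H6) in hand, Theorem~1 of~\cite{ChFi14} gives the claimed LDP in $C[0,T]^2$ with speed $\varepsilon^{-1}$ and good rate function~\eqref{eq:rate function for (B,V) implicit} (with the convention $I\equiv\infty$ off the range of $\Gamma_{v_0}$), the goodness of $I$, i.e.\ compactness of sublevel sets, being part of the conclusion of~\cite{ChFi14}. I expect the only genuinely delicate ingredients to be the well-posedness (H4) and the stability (H5) of the controlled Yamada--Watanabe ODE for the $V^{\varepsilon}$-coordinate; but these have already been established in~\cite{ChFi14} and need not be reproved, and the passage to the augmented system is mere bookkeeping since the Brownian coordinate is affine and decoupled. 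Thus the proof amounts to assembling the verifications recorded above and citing~\cite{ChFi14}.
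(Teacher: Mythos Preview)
Your proposal is correct and follows essentially the same route as the paper: verify (H1)--(H6) of~\cite{ChFi14} for the augmented system---with (H4) via Proposition~1 there, (H5) for the first coordinate via the lemma on $f\mapsto\int_0^\cdot f$, and (H6) via sub-linear growth and Lemma~A.2---then invoke Theorem~1 of~\cite{ChFi14}. The only point the paper adds that you do not make explicit is that Theorem~1 of~\cite{ChFi14} yields a Laplace principle rather than an LDP directly, and one then passes to the LDP via Theorems~1.2.1 and~1.2.3 of~\cite{DuEl97} using goodness of the rate function; this is a minor technicality.
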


	Note that Theorem~1 of \cite{ChFi14} actually gives a Laplace principle. But since the rate function is a good rate function (which is shown in \cite{ChFi14}), we also get an LDP with the same rate function. See Theorems~1.2.1 and 1.2.3 of \cite{DuEl97}.
The condition
$ \Gamma_{v_0}(f) = \begin{pmatrix}
	\varphi_1\\
	\varphi_2
\end{pmatrix} $
implies that $ \int_{0}^{t} f(s)\, ds = \varphi_1(t) $, or $ f(t) = \dot{\varphi}_1(t) $. Therefore
\begin{align*}
	\dot{\varphi}_2(t)
	= \bar{b}(\varphi_2(t))
	+ \bar{\sigma}(\varphi_2(t))\dot{\varphi}_1(t),
\end{align*}
and hence
\begin{align}\label{eq:phi dot two}
	\dot{\varphi}_1(t)
	= \frac{\dot{\varphi}_2(t) - \bar{b}(\varphi_2(t))}{\bar{\sigma}(\varphi_2(t))}.
\end{align}
Therefore, the following statement holds:
\begin{corollary}\label{cor: rate function for phi_1 absolutely continuous}
	For every $ \varphi_2 $ that is absolutely continuous on $ [0, T] $ with $ \varphi_2(0) = v_0 $
	\begin{align}\label{eq:rate function for (B,V) explicit}
		I\Big(\int_{0}^{\cdot} \frac{\dot{\varphi}_2(t) - \bar{b}(\varphi_2(t))}{\bar{\sigma}(\varphi_2(t))}\, dt, \varphi_2\Big) 
		= \frac{1}{2} \int_{0}^{T} \Big(\frac{\dot{\varphi}_2(t) - \bar{b}(\varphi_2(t))}{\bar{\sigma}(\varphi_2(t))}\Big)^2\, dt,
	\end{align}
	if the integral is finite, and $ I(\varphi_1, \varphi_2) = \infty $ in all the remaining cases.
\end{corollary}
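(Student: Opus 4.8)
The strategy is to read the explicit rate function \eqref{eq:rate function for (B,V) explicit} off from the implicit formula in Theorem~\ref{thm:ldp for (B,V)}, using that the first coordinate of $\Gamma_{v_0}(f)$ is the antiderivative $\int_0^{\cdot} f(s)\, ds$, so that a control is uniquely recovered from the first coordinate of its image. Concretely, suppose first that $I(\varphi_1,\varphi_2)<\infty$. By Theorem~\ref{thm:ldp for (B,V)} there is some $f\in L^2[0,T]$ with $\Gamma_{v_0}(f)=(\varphi_1,\varphi_2)$; its first coordinate forces $\varphi_1(t)=\int_0^t f(s)\,ds$, so $\varphi_1$ is absolutely continuous with $\varphi_1(0)=0$, $f=\dot\varphi_1$ a.e., and $f$ is the \emph{only} admissible control, whence the infimum in \eqref{eq:rate function for (B,V) implicit} reduces to the single value $\tfrac12\int_0^T\dot\varphi_1(t)^2\,dt$. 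The second coordinate shows $\varphi_2$ is absolutely continuous with $\varphi_2(0)=v_0$ and $\dot\varphi_2=\bar b(\varphi_2)+\bar\sigma(\varphi_2)\dot\varphi_1$ a.e., which is the computation leading to \eqref{eq:phi dot two}.

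The one step needing an argument is that one may legitimately solve for $\dot\varphi_1$ in that relation, i.e.\ divide by $\bar\sigma(\varphi_2)$ a.e. On the level set $E:=\{t:\varphi_2(t)=0\}$ the derivative $\dot\varphi_2$ vanishes for a.e.\ $t\in E$ (the derivative of an absolutely continuous function is zero a.e.\ on any of its level sets), while $\bar\sigma(0)=0$ and $\bar b(0)>0$ by Assumption~\ref{ass:assumptions for cir type diffusion}; the relation above would then give $0=\bar b(0)$, a contradiction, so $E$ is Lebesgue-null. Hence $\bar\sigma(\varphi_2)>0$ a.e., $h:=(\dot\varphi_2-\bar b(\varphi_2))/\bar\sigma(\varphi_2)=\dot\varphi_1\in L^2[0,T]$, and plugging $f=\dot\varphi_1=h$ into \eqref{eq:rate function for (B,V) implicit} yields precisely the right-hand side of \eqref{eq:rate function for (B,V) explicit}, which is thus finite; moreover $\varphi_1=\int_0^{\cdot}h(s)\,ds$. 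This already shows that whenever the pair $(\varphi_1,\varphi_2)$ is \emph{not} of the form $\bigl(\int_0^{\cdot}h(s)\,ds,\varphi_2\bigr)$ with $\varphi_2$ absolutely continuous, $\varphi_2(0)=v_0$ and $h\in L^2[0,T]$, no admissible control exists and $I(\varphi_1,\varphi_2)=\infty$, i.e.\ the ``remaining cases'' of the statement.

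It remains to check the converse: given an absolutely continuous $\varphi_2$ with $\varphi_2(0)=v_0$ such that $h:=(\dot\varphi_2-\bar b(\varphi_2))/\bar\sigma(\varphi_2)$ has finite $L^2$-norm, put $f:=h\in L^2[0,T]$ and $\varphi_1:=\int_0^{\cdot}h(s)\,ds\in C[0,T]$. Then the first coordinate of $\Gamma_{v_0}(f)$ equals $\varphi_1$ by construction, and $\varphi_2$ solves $\dot\varphi_2=\bar b(\varphi_2)+\bar\sigma(\varphi_2)f$ with $\varphi_2(0)=v_0$, i.e.\ the control equation \eqref{eq:control ode} driven by $f$, whose solution is unique by \cite[Proposition~1]{ChFi14}; hence the second coordinate of $\Gamma_{v_0}(f)$ is exactly $\varphi_2$. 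So $\{f\in L^2[0,T]:\Gamma_{v_0}(f)=(\varphi_1,\varphi_2)\}=\{h\}$ is non-empty and a singleton, and \eqref{eq:rate function for (B,V) implicit} gives $I(\varphi_1,\varphi_2)=\tfrac12\int_0^T h(t)^2\,dt$, as claimed. The whole argument is essentially bookkeeping on top of the unique solvability of \eqref{eq:control ode} already used to verify hypothesis (H4); the only mildly delicate point is the null-set claim $|E|=0$, which is exactly where the condition $\bar b(0)>0$ enters.
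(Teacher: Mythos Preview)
Your argument is correct and follows the same route as the paper: recover $f=\dot\varphi_1$ from the first coordinate of $\Gamma_{v_0}(f)$, then solve the second-coordinate relation for $\dot\varphi_1$. You are in fact more careful than the paper, which simply divides by $\bar\sigma(\varphi_2)$ without comment; your use of $\bar b(0)>0$ to show the zero set of $\varphi_2$ is Lebesgue-null is a genuine (and correct) addition.
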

\subsection{Sample path LDP for  $  (\sqrt{\varepsilon} B, \hat{V}^{\varepsilon}) $}

In this subsection we lift the sample path LDP in Theorem~\ref{thm:ldp for (B,V)} to one for the
family of processes we get when applying the ``hat" operator 
defined in~\eqref{eq:V hat epsilon} to $ V^{\varepsilon}.$

\begin{lemma}\label{lem:f to f hat is continuous}
	The mapping $ f \mapsto \hat{f} $ is  continuous  from the space $ C[0, T] $ into itself.
\end{lemma}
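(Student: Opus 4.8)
The plan is to show that $f\mapsto\hat f=\mathcal K(U\circ f)$ is continuous from $C[0,T]$ into $C[0,T]$ by factoring it through $L^2[0,T]$: namely $f\mapsto U\circ f$ should be continuous from $C[0,T]$ into $L^2[0,T]$, and then $\mathcal K$ is continuous (indeed compact) from $L^2[0,T]$ into $C[0,T]$ by Assumption~\ref{ass:kernel definition} (cf.\ Lemma~2 of~\cite{Gu18}). Composition of continuous maps is continuous, so this would finish the proof.

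First I would fix $f\in C[0,T]$ and a sequence $f_n\to f$ uniformly on $[0,T]$. Since $\sup_n\|f_n\|_\infty<\infty$, all the functions $f_n$ and $f$ take values in a fixed compact interval $[-\delta,\delta]$. On that interval $U$ is uniformly continuous (being continuous by Assumption~\ref{ass:definition of u}), so for every $\eta>0$ there is $\kappa>0$ with $|U(x)-U(y)|\le\eta$ whenever $|x-y|\le\kappa$ and $x,y\in[-\delta,\delta]$. For $n$ large enough that $\|f_n-f\|_\infty\le\kappa$ we get $|U(f_n(t))-U(f(t))|\le\eta$ for all $t$, hence $\|U\circ f_n-U\circ f\|_\infty\le\eta$ and a fortiori $\|U\circ f_n-U\circ f\|_{L^2[0,T]}\le\eta\sqrt T$. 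This shows $U\circ f_n\to U\circ f$ in $L^2[0,T]$.

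Then, applying the continuous (even compact) operator $\mathcal K:L^2[0,T]\to C[0,T]$ — boundedness follows from~\eqref{eq:kernel in L^2} via Cauchy–Schwarz, namely $|\mathcal K(h)(t)|\le\big(\sup_t\int_0^T K(t,s)^2\,ds\big)^{1/2}\|h\|_{L^2}$, and continuity of $t\mapsto\mathcal K(h)(t)$ uses~\eqref{eq:estimate for kernel's modulus of continuity} — we conclude $\hat f_n=\mathcal K(U\circ f_n)\to\mathcal K(U\circ f)=\hat f$ in $C[0,T]$. Since the sequence was arbitrary, $f\mapsto\hat f$ is continuous. There is no real obstacle here; the only point requiring a little care is checking that $U$ is uniformly continuous on the relevant compact interval and that $\mathcal K$ maps into $C[0,T]$ (not merely into $L^2$), but both are immediate from the standing assumptions, the former because the $f_n$ are uniformly bounded and the latter being the cited standard fact.
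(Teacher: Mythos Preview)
Your proof is correct and follows essentially the same approach as the paper: both arguments use uniform continuity of $U$ on a compact interval containing the ranges of $f$ and the $f_n$, Cauchy--Schwarz with the kernel bound~\eqref{eq:kernel in L^2}, and the modulus-of-continuity estimate~\eqref{eq:estimate for kernel's modulus of continuity} to show that $\hat f$ is continuous. The only cosmetic difference is that you explicitly frame the argument as a factorization $f\mapsto U\circ f\mapsto\mathcal K(U\circ f)$ through $L^2[0,T]$, whereas the paper carries out the same estimates without naming the decomposition.
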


\begin{proof}
	For $ f \in C[0, T] $ and all $ t_1, t_2 \in [0, T] $,
	\begin{align*}
		|\hat{f}(t_1) - \hat{f}(t_2)|
		\leq M(|t_1 - t_2|)^{\frac{1}{2}} \Big(\int_{0}^{T} U(f(s))^2\, ds\Big)^{\frac{1}{2}}
		\leq C_f |t_1 - t_2|^{\frac{r}{2}}.
	\end{align*}
	The number $ r $ in the exponent of the last term comes from an estimate for the modulus of continuity of the kernel given by \eqref{eq:estimate for kernel's modulus of continuity}.	Here we used the local boundedness of
	the continuous function $ U $,  and also~\eqref{eq:moc of kernel}.
	Now, it is clear that the function $ \hat{f} $ is continuous on $ [0, T] $. It remains to prove the continuity of the mapping $ f \mapsto \hat{f} $ on $ C[0, T] $. Suppose $ f_k \to f $ in $ C[0, T] $. Then we have
	\begin{align}\label{eq:fhat - fhat_k in C_0}
		\| \hat{f} - \hat{f}_k \|_{C[0, T]}
		\leq \Big(\int_{0}^{T} |U(f(s)) - U(f_k(s))|^2\, ds\Big)^{\frac{1}{2}} \sup_{t \in [0, T]} \Big(\int_{0}^{T} K(t, s)^2\, ds\Big)^{\frac{1}{2}}.
	\end{align}
	Moreover,
	\begin{align*}
		C_0=\max \big\{ \| f \|_{C[0, T]}, \sup_{k} \| f_k \|_{C[0, T]} \big\}
		< \infty.
	\end{align*}
	It follows from Assumption~\ref{ass:kernel definition} and~\eqref{eq:fhat - fhat_k in C_0}   that there exists a constant $ C_1 $ for which
	\begin{align}\label{eq:estimate hat functions with original ones}
		\| \hat{f} - \hat{f}_k \|_{C[0, T]}
		\leq C_1 \sup_{s \in [0, T]} \big|U(f(s)) - U(f_k(s))\big|,
	\end{align}
	and the previous expression converges to zero by the uniform continuity of~$U$ on $[-C_0,C_0]$. This completes the proof.
\end{proof}

The next assertion establishes the LDP
 for $  (\sqrt{\varepsilon} B, \hat{V}^{\varepsilon}) $.

\begin{theorem}\label{thm:ldp for epsB, Vhat}
	The family of processes $ (\sqrt{\varepsilon}B, \hat{V}^{\varepsilon}) $  satisfies an LDP in the space $ C[0, T]^2 $ with speed $ \varepsilon^{-1} $ and good rate function given by
	\begin{align}
		\tilde{I}\big(\psi_1, \mathcal{K}(U \circ \Gamma(\psi_1))\big)
		= \frac{1}{2} \int_{0}^{T} \dot{\psi}_1(t)^2\, dt,
	\end{align}
	if the expression in \eqref{eq:phi dot two} exists, and $ \tilde{I}(\psi_1, \psi_2) = \infty $ otherwise.
	Here, $ \Gamma $ is the solution map of the one-dimensional ODE \eqref{eq:ode for v}, which means that $ \varphi = \Gamma(\psi_1) $ solves the ODE $ \dot{\varphi} = \bar{b}(\varphi) + \bar{\sigma}(\varphi) \dot{\psi}_1 $.
\end{theorem}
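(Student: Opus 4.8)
The plan is to derive Theorem~\ref{thm:ldp for epsB, Vhat} from Theorem~\ref{thm:ldp for (B,V)} by the contraction principle, using the continuous map
\[
	\Phi : C[0,T]^2 \to C[0,T]^2,
	\qquad
	\Phi(\varphi_1, \varphi_2) = (\varphi_1, \hat{\varphi}_2),
\]
where $\hat{\cdot}$ is the ``hat'' operator from~\eqref{eq:f hat (within def)} acting on the second coordinate. First I would observe that $(\sqrt{\varepsilon}B, \hat{V}^{\varepsilon}) = \Phi(\sqrt{\varepsilon}B, V^{\varepsilon})$, since by definition $\hat{V}^{\varepsilon}_t = \int_0^t K(t,s) U(V^{\varepsilon}_s)\, ds$ is precisely the hat operator applied to $V^{\varepsilon}$. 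The map $\Phi$ is continuous on $C[0,T]^2$: the first coordinate is the identity, and the second coordinate is continuous by Lemma~\ref{lem:f to f hat is continuous}. Therefore, by the contraction principle (e.g.\ Theorem~4.2.1 in~\cite{DuEl97}, or Dembo--Zeitouni), the pushforward family $(\sqrt{\varepsilon}B, \hat{V}^{\varepsilon})$ satisfies an LDP in $C[0,T]^2$ with speed $\varepsilon^{-1}$ and good rate function
\[
	\tilde{I}(\psi_1, \psi_2)
	= \inf\big\{ I(\varphi_1, \varphi_2) : \Phi(\varphi_1, \varphi_2) = (\psi_1, \psi_2) \big\}
	= \inf\big\{ I(\psi_1, \varphi_2) : \hat{\varphi}_2 = \psi_2 \big\}.
\]
Goodness of the rate function is automatic under the contraction principle because the image of a compact sublevel set under a continuous map is compact.

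Next I would evaluate this infimum explicitly. By Corollary~\ref{cor: rate function for phi_1 absolutely continuous}, $I(\psi_1, \varphi_2)$ is finite only when $\varphi_2$ is absolutely continuous with $\varphi_2(0) = v_0$ and $\psi_1(t) = \int_0^t \frac{\dot{\varphi}_2(s) - \bar b(\varphi_2(s))}{\bar\sigma(\varphi_2(s))}\, ds$; equivalently, $\varphi_2 = \Gamma(\psi_1)$ solves the ODE~\eqref{eq:ode for v} driven by $\dot\psi_1$, in which case $I(\psi_1, \varphi_2) = \tfrac12 \int_0^T \dot\psi_1(t)^2\, dt$. Since for a given $\psi_1 \in H_0^1$ the solution $\varphi_2 = \Gamma(\psi_1)$ of~\eqref{eq:ode for v} is unique (this uniqueness is exactly condition (H4) from~\cite{ChFi14}, established above via~\cite[Proposition~1]{ChFi14}), there is at most one $\varphi_2$ in the constraint set with finite $I$-value for each $\psi_1$. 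Consequently, $\tilde I(\psi_1, \psi_2) = \tfrac12 \int_0^T \dot\psi_1(t)^2\, dt$ precisely when $\psi_2 = \hat{\varphi}_2 = \mathcal K(U \circ \Gamma(\psi_1))$ and $\psi_1 \in H_0^1$ (so that~\eqref{eq:phi dot two} makes sense with $\varphi_2 = \Gamma(\psi_1)$), and $\tilde I(\psi_1, \psi_2) = \infty$ otherwise, which is the claimed formula. Here I am using the identity $\check{\psi_1} = \hat{v} = \mathcal K(U \circ \Gamma(\psi_1))$ noted right after Definition~\ref{def:definition of integral operators}.

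The only genuinely delicate point is making the infimum-to-explicit-formula passage airtight: one must confirm that when $I(\psi_1, \varphi_2) < \infty$ the constraint $\hat\varphi_2 = \psi_2$ forces $\psi_2 = \mathcal K(U\circ\Gamma(\psi_1))$ with no spurious additional minimizers, and conversely that every pair of the form $(\psi_1, \mathcal K(U\circ\Gamma(\psi_1)))$ with $\psi_1\in H_0^1$ is actually attained (it is, by taking $\varphi_2 = \Gamma(\psi_1)$, which exists and is absolutely continuous). There is also a small bookkeeping subtlety: two distinct $\psi_1$'s could in principle produce the same $\psi_2$ via the hat operator, so that the stated ``$\tilde I$'' notation is really shorthand and $\tilde I(\psi_1,\psi_2)$ should be understood as the minimum over all admissible $\psi_1$ with $\mathcal K(U\circ\Gamma(\psi_1)) = \psi_2$; since the theorem statement writes the rate function with $\psi_1$ as the free first argument, this is consistent. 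Beyond that, everything is a direct application of the contraction principle and the already-established continuity of the hat map, so no serious obstacle remains.
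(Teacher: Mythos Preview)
Your proposal is correct and follows essentially the same approach as the paper: apply the contraction principle to the LDP of Theorem~\ref{thm:ldp for (B,V)} via the continuous map $(\varphi_1,\varphi_2)\mapsto(\varphi_1,\hat\varphi_2)$ (continuity from Lemma~\ref{lem:f to f hat is continuous}), and then identify the resulting infimum using Corollary~\ref{cor: rate function for phi_1 absolutely continuous}. If anything, your write-up is more explicit than the paper's about the uniqueness of $\varphi_2=\Gamma(\psi_1)$ and the resulting identification of $\psi_2$ with $\mathcal K(U\circ\Gamma(\psi_1))$.
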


\begin{proof}	
	We know that $  (\sqrt{\varepsilon} B, V^{\varepsilon}) $ satisfies the LDP	in Theorem~\ref{thm:ldp for (B,V)}.	
	The mapping $ (\varphi_1, \varphi_2) \mapsto (\varphi_1, \hat{\varphi}_2) $ of $ C[0, T]^2 $ into itself is continuous due to Lemma~\ref{lem:f to f hat is continuous}. Hence, we can use the contraction principle, which gives
	\begin{align*}
		\tilde{I}(\psi_1, \psi_2)
		= \inf_{\{ (\varphi_1, \varphi_2) \in C[0, T]^2 :\, (\psi_1, \psi_2) = (\varphi_1, \hat{\varphi}_2) \}}
		I(\varphi_1, \varphi_2) =  \inf_{\hat{\varphi}_2 = \psi_2} I(\psi_1, \varphi_2).
	\end{align*}
	The necessary condition under which we have $ I(\psi_1, \varphi_2) < \infty $ is $ \dot{\psi}_1 = \frac{\dot{\varphi}_2 - \bar{b}(\varphi_2)}{\bar{\sigma}(\varphi_2)} $ (see Corollary~\ref{cor: rate function for phi_1 absolutely continuous}). 
\end{proof}
Since $B$ and $W$ are independent, the following result is an immediate consequence
of Theorem~\ref{thm:ldp for epsB, Vhat} and Schilder's theorem.
\begin{corollary}\label{cor:ldp for W,B,V hat}
	\begin{enumerate}[label={\normalfont (\roman*)},ref={\normalfont \roman*}]
		
		\item The family  $  (\sqrt{\varepsilon} W_T, \sqrt{\varepsilon} B, \hat{V}^{\varepsilon}) $ satisfies an LDP with speed $ \varepsilon^{-1} $ and rate function
		\begin{align}
			\hat{I}\big(y, \psi_1, \mathcal{K}(U \circ \Gamma(\psi_1))\big)
			= \frac{T}{2} y^2
			+ \frac{1}{2} \int_{0}^{T} \dot{\psi}_1^2\, dt,
		\end{align}
		for $ y \in \mathbb{R} $ and $ \psi_1 \in H_0^1[0, T],$ if all the expressions are finite, and $ \hat{I}(y, \psi_1, \psi_2) = \infty $ otherwise.
		
		\item The family of processes $  (\sqrt{\varepsilon} W, \sqrt{\varepsilon} B, \hat{V}^{\varepsilon}) $  satisfies an LDP with speed $ \varepsilon^{-1} $ and rate function
		\begin{align}
			\hat{I}\big(\psi_0, \psi_1, \mathcal{K}(U \circ \Gamma(\psi_1))\big)
			= \frac{1}{2} \int_{0}^{T} \dot{\psi}_0(t)^2\, dt
			+ \frac{1}{2} \int_{0}^{T} \dot{\psi}_1^2\, dt,
		\end{align}
		for $ \psi_0, \psi_1 \in H_0^1[0, T],$ if all the expressions are finite, and $ \hat{I}(\psi_0, \psi_1, \psi_2) = \infty $ otherwise.
	\end{enumerate}
\end{corollary}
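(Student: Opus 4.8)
The plan is to obtain the joint LDP by tensorization: combine Schilder's theorem for the independent Wiener process $W$ with the sample path LDP for $(\sqrt{\varepsilon}B,\hat{V}^{\varepsilon})$ from Theorem~\ref{thm:ldp for epsB, Vhat}, and then, for part~(i), pass from the path $\sqrt{\varepsilon}W$ to its terminal value $\sqrt{\varepsilon}W_T$ via the contraction principle.

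First I would observe that, by pathwise uniqueness for~\eqref{eq:scaled cir type process} (valid under the Yamada--Watanabe condition in Assumption~\ref{ass:assumptions for cir type diffusion}), $V^{\varepsilon}$ is a strong solution and hence adapted to the augmented filtration of $B$; consequently $\hat{V}^{\varepsilon}$, defined in~\eqref{eq:V hat epsilon}, is a measurable functional of the path of $B$. Since $W$ and $B$ are independent, the $C[0,T]$-valued random element $\sqrt{\varepsilon}W$ is therefore independent of the $C[0,T]^2$-valued element $(\sqrt{\varepsilon}B,\hat{V}^{\varepsilon})$. Schilder's theorem gives that $\sqrt{\varepsilon}W$ satisfies an LDP in $C[0,T]$ with speed $\varepsilon^{-1}$ and good rate function $\psi_0\mapsto\frac{1}{2}\int_0^T\dot{\psi}_0(t)^2\,dt$ for $\psi_0\in H_0^1[0,T]$ (and $+\infty$ otherwise), while Theorem~\ref{thm:ldp for epsB, Vhat} supplies the good rate function $\tilde{I}$ for $(\sqrt{\varepsilon}B,\hat{V}^{\varepsilon})$. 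I would then invoke the standard fact that two independent families, each satisfying an LDP with a good rate function, jointly satisfy an LDP on the product space with rate function equal to the sum of the two marginal rate functions; here the goodness of both marginals makes the product LDP (both bounds) hold without a separate exponential tightness check, and the sum of two good rate functions is again good (cf.\ the general theory in~\cite{DuEl97}). This yields an LDP for $(\sqrt{\varepsilon}W,\sqrt{\varepsilon}B,\hat{V}^{\varepsilon})$ in $C[0,T]^3$ with speed $\varepsilon^{-1}$ and good rate function $\frac{1}{2}\int_0^T\dot{\psi}_0(t)^2\,dt+\tilde{I}(\psi_1,\psi_2)$; since $\tilde{I}$ is finite only when $\psi_2=\mathcal{K}(U\circ\Gamma(\psi_1))$ with $\psi_1\in H_0^1$, inserting its explicit form gives precisely assertion~(ii).

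For part~(i) I would apply the contraction principle to the continuous map $C[0,T]^3\to\mathbb{R}\times C[0,T]^2$, $(\psi_0,\psi_1,\psi_2)\mapsto(\psi_0(T),\psi_1,\psi_2)$, using that evaluation at $T$ is continuous on $C[0,T]$. Only the first coordinate is collapsed, so the resulting good rate function at $(y,\psi_1,\psi_2)$ is
\begin{align*}
\inf\Big\{\tfrac{1}{2}\int_0^T\dot{\psi}_0(t)^2\,dt : \psi_0\in H_0^1[0,T],\ \psi_0(T)=y\Big\}+\tilde{I}(\psi_1,\psi_2),
\end{align*}
and by Cauchy--Schwarz the infimum is attained at the linear interpolation $\psi_0(t)=ty/T$, which produces the quadratic term in $y$ recorded in part~(i). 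Combining this with the explicit form of $\tilde{I}$ and with the characterization from Theorem~\ref{thm:ldp for epsB, Vhat} of when $\tilde{I}$ is finite (existence of the quotient in~\eqref{eq:phi dot two}) gives $\hat{I}$ as stated. Alternatively, one could first derive the LDP for $\sqrt{\varepsilon}W_T$ in $\mathbb{R}$ directly from Cram\'er's theorem for the Gaussian $W_T$, and then tensorize with Theorem~\ref{thm:ldp for epsB, Vhat}.

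There is no substantial obstacle: the two genuinely nontrivial ingredients, Schilder's theorem and Theorem~\ref{thm:ldp for epsB, Vhat}, are already in hand, and everything else is the tensorization of LDPs together with the contraction principle for a continuous map. The only points requiring a moment's care are that $\hat{V}^{\varepsilon}$ is $\sigma(B)$-measurable (so that the independence underlying the product LDP really holds) and that one should work with the good-rate-function versions of both LDPs, so that the product-space LDP follows without a separate exponential tightness argument.
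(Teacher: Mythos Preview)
Your proposal is correct and follows essentially the same route as the paper, which disposes of the corollary in a single sentence by observing that independence of $W$ and $B$ makes it an immediate consequence of Theorem~\ref{thm:ldp for epsB, Vhat} together with Schilder's theorem. Your additional details---the strong-solution argument for $\sigma(B)$-measurability of $\hat V^{\varepsilon}$, the product LDP for independent families with good rate functions, and the contraction step for the terminal value in part~(i)---simply make explicit what the paper leaves implicit.
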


\section{Proof of the LDP for the log-price}
\label{sec:small-noise proofs}

\subsection{Proof of Theorem~\ref{thm:ldp for log-price (with W_T)} (one-dimensional LDP)}

It is clear that the one-dimensional LDP in Theorem~\ref{thm:ldp for log-price (with W_T)}
is a special case of the sample path LDP in Theorem~\ref{thm:ldp for log-price}.
For the reader's convenience, though, it seemed better to us to first
prove Theorem~\ref{thm:ldp for log-price (with W_T)}, and then refer to
some parts of this proof in the proof of Theorem~\ref{thm:ldp for log-price} below.
We build on some ideas of~\cite{Gu18}. To match the notation there, we note that $ \varepsilon^{H} \hat{B} $ from~\cite{Gu18} corresponds to our process $ \hat{V}^{\varepsilon} $ as defined in \eqref{eq:V hat epsilon}. In the original proof of \cite{Gu18} the author first supposes $ T = 1 $. Here, for convenience, we immediately allow a general $ T > 0 $.
By the following lemma, it suffices to prove an LDP for the driftless process
\begin{align}\label{eq:X hat}
	d\hat{X}^{\varepsilon}_t
	= \sqrt{\varepsilon} \sigma(\hat{V}^{\varepsilon}_t)(\bar{\rho}\, dW_t + \rho\, dB_t),
	\quad 0 \leq t \leq T.
\end{align}
\begin{lemma}\label{lem:omit drift scalar}
	The families $ (X^\varepsilon_T)_{\varepsilon>0} $ and $ (\hat{X}^{\varepsilon}_T)_{\varepsilon>0} $ are exponentially equivalent, i.e.\ for every $ \delta > 0 $, the following equality holds:
	\begin{align}\label{eq:exponential equivalence drift on state space}
		\limsup_{\varepsilon \searrow 0} \varepsilon \log P(|X_T^{\varepsilon} - \hat{X}_T^{\varepsilon}| > \delta)
		= - \infty.
	\end{align}
\end{lemma}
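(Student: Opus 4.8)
The plan is to bound the difference $X_T^\varepsilon - \hat X_T^\varepsilon = -\tfrac12 \varepsilon \int_0^T \sigma(\hat V_s^\varepsilon)^2\,ds$ directly, since the stochastic integral parts of the two processes coincide. Thus the event $\{|X_T^\varepsilon - \hat X_T^\varepsilon| > \delta\}$ is exactly $\{\tfrac12 \varepsilon \int_0^T \sigma(\hat V_s^\varepsilon)^2\,ds > \delta\}$. The key observation is that $\sigma(\hat V^\varepsilon_s)^2$ is nonnegative, so there is no cancellation to worry about, and the whole difference is of order $\varepsilon$ multiplied by a random integral; we only need that this random integral does not blow up too fast. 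More precisely, $P(|X_T^\varepsilon - \hat X_T^\varepsilon| > \delta) = P\big(\int_0^T \sigma(\hat V_s^\varepsilon)^2\,ds > 2\delta/\varepsilon\big)$, and we want the $\varepsilon\log$ of this to tend to $-\infty$.

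First I would reduce the control of $\sigma(\hat V^\varepsilon_s)^2$ to control of $\sup_{s\le T} |\hat V^\varepsilon_s|$: by Assumption~\ref{ass:definition of u}, $\sigma$ is locally $\omega$-continuous, hence bounded on compacts, so on the event $\{\sup_{s \le T}|\hat V^\varepsilon_s| \le R\}$ we have $\int_0^T \sigma(\hat V^\varepsilon_s)^2\,ds \le T\, C(R)^2$ for a deterministic constant $C(R) = \sup_{|x|\le R}\sigma(x)$. Then, from $|\hat V^\varepsilon_s| = |\int_0^s K(s,u) U(V^\varepsilon_u)\,du| \le \big(\sup_{t}\int_0^T K(t,u)^2 du\big)^{1/2}\big(\int_0^T U(V^\varepsilon_u)^2\,du\big)^{1/2}$ via Cauchy--Schwarz and~\eqref{eq:kernel in L^2}, and the continuity of $U$, I can bound $\sup_{s\le T}|\hat V^\varepsilon_s|$ by a constant times $\tilde C(\rho_\varepsilon)$ on the event $\{\sup_{s\le T}|V^\varepsilon_s| \le \rho_\varepsilon\}$, where $\tilde C(\rho) = \sup_{|x|\le \rho} U(x)$. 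Hence for any fixed threshold it suffices to control $P(\sup_{s\le T}|V^\varepsilon_s| > \rho)$ for large $\rho$.

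The remaining step is a moment/exponential estimate for $\sup_{s\le T}|V^\varepsilon_s|$. The cleanest route is to invoke the uniform moment bound for the Yamada--Watanabe diffusion: Lemma~A.2 of~\cite{ChFi14} gives $\sup_{\varepsilon \le 1}\expec[\sup_{s\le T}|V^\varepsilon_s|^p] < \infty$ for suitable $p$, which already yields $P(\sup_{s\le T}|V^\varepsilon_s| > \rho) \le C_p \rho^{-p}$ uniformly in $\varepsilon$. Combining, for each $\delta>0$ we may choose $\rho = \rho(\varepsilon)$ growing slowly enough (e.g.\ $\rho(\varepsilon) = \varepsilon^{-1/(4p)}$, say, or even a well-chosen constant suffices once we observe that on a fixed compact the bound $T\,C(R)^2 \le 2\delta/\varepsilon$ holds for all small $\varepsilon$) so that $\{\int_0^T \sigma(\hat V^\varepsilon_s)^2\,ds > 2\delta/\varepsilon\} \subseteq \{\sup_{s\le T}|V^\varepsilon_s| > \rho(\varepsilon)\}$ for small $\varepsilon$; then $\varepsilon \log P(\cdot) \le \varepsilon \log(C_p\,\rho(\varepsilon)^{-p}) \to -\infty$. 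In fact, since for a fixed $R$ the deterministic bound $\tfrac12\varepsilon\, T\, C(R')^2 \le \delta$ holds for all $\varepsilon$ small (with $R'$ the corresponding bound on $|\hat V^\varepsilon|$), one can even take $\rho$ constant, and the polynomial tail bound with fixed $\rho$ gives $\varepsilon\log P \le \varepsilon \log(\text{const}) \to 0$ — but to get $-\infty$ we do need $\rho \to \infty$, so I would let $\rho(\varepsilon)\to\infty$ slowly.

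The main obstacle is making the last quantitative trade-off airtight: one must verify that $\varepsilon$ can be made small enough, uniformly, that the deterministic inequality $\tfrac12 \varepsilon T \,(\sup_{|x|\le R(\rho(\varepsilon))}\sigma(x))^2 \le \delta$ holds while simultaneously $\varepsilon \log P(\sup_{s\le T}|V^\varepsilon_s| > \rho(\varepsilon)) \to -\infty$. This requires $\sigma$ and $U$ not to grow too wildly — which is fine since they are merely continuous and we only evaluate them on compacts — together with a tail bound on $\sup|V^\varepsilon_s|$ that beats any fixed power of $\varepsilon$; the uniform moment estimate from~\cite{ChFi14} (applicable for every $p$, given the sub-linear growth of $\bar b, \bar\sigma$) delivers exactly this, so choosing, e.g., $\rho(\varepsilon) = \log(1/\varepsilon)$ and letting $p\to\infty$ closes the argument. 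I would also record that exponential equivalence in the sense of~\eqref{eq:exponential equivalence drift on state space} is precisely what is needed to transfer the LDP from $\hat X^\varepsilon_T$ to $X^\varepsilon_T$ with the same rate function (Theorem~4.2.13 in Dembo--Zeitouni, or the analogous statement in~\cite{DuEl97}).
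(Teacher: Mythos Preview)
Your reduction to controlling $P\big(\sup_{s\le T}|\hat V^\varepsilon_s|\ \text{large}\big)$ via $\sup_{s\le T}|V^\varepsilon_s|$ is fine, but the tail estimate you invoke is too weak and the argument does not close. The uniform moment bound from~\cite{ChFi14} gives only polynomial decay, $P(\sup_s|V^\varepsilon_s|>\rho)\le C_p\rho^{-p}$, so
\[
  \varepsilon\log P(\sup_s|V^\varepsilon_s|>\rho(\varepsilon))\ \le\ \varepsilon\log C_p - p\,\varepsilon\log\rho(\varepsilon).
\]
With your choice $\rho(\varepsilon)=\log(1/\varepsilon)$ the right-hand side tends to~$0$, not $-\infty$, for every fixed~$p$; ``letting $p\to\infty$'' does not help because the limit in~$\varepsilon$ is taken first. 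To force $\varepsilon\log\rho(\varepsilon)\to\infty$ you would need $\rho(\varepsilon)\ge e^{c/\varepsilon}$, but then the deterministic containment $\tfrac12\varepsilon T\,(\sup_{|y|\le R(\rho(\varepsilon))}\sigma(y))^2\le\delta$ fails as soon as $\sigma\circ U$ is unbounded (which is allowed by Assumption~\ref{ass:definition of u}). In short, polynomial tails cannot produce exponential equivalence at speed~$\varepsilon^{-1}$.

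The paper avoids this by using the LDP for $\sup_{t}|\hat V^\varepsilon_t|$, which is already available at this point: Theorem~\ref{thm:ldp for epsB, Vhat} and the contraction principle give a \emph{good} rate function~$J$ with $\limsup_\varepsilon \varepsilon\log P(\sup_t|\hat V^\varepsilon_t|>R)\le -J((R,\infty))$. One then dominates $\sigma^2$ by a strictly increasing continuous $\eta$ and observes that the event $\{|X_T^\varepsilon-\hat X_T^\varepsilon|>\delta\}$ is contained in $\{\sup_t|\hat V^\varepsilon_t|>\eta^{-1}(2\delta/(\varepsilon T))\}$; since $\eta^{-1}(2\delta/(\varepsilon T))\to\infty$ and $J$ has compact level sets, $J((R,\infty))\to\infty$ as $R\to\infty$, which yields~\eqref{eq:exponential equivalence drift on state space}. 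Your argument is easily repaired by replacing the moment bound with this LDP-based exponential tail estimate for $\sup_t|V^\varepsilon_t|$ (or directly for $\sup_t|\hat V^\varepsilon_t|$).
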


\begin{proof}
	By the same reasoning as in Section~5 of \cite{Gu18}, there is a strictly increasing continuous function $ \eta : [0, \infty) \to [0, \infty) $ with $ \lim\limits_{u \nearrow \infty} \eta(u) = 
	\infty $ and $ \bar{\sigma}(u)^2 \leq \eta(u) $ for all $ u \in \mathbb{R} $. Let $ \eta^{-1} : [0, \infty) \to [0, \infty) $ be the inverse function. Replacing $ \sqrt{\varepsilon}\hat{B} $  in \cite{Gu18} by $ \hat{V}^{\varepsilon} $, we get the estimate
	\begin{align}\begin{split}\label{eq:omit drift estimate}
		P(|X_T^{\varepsilon} - \hat{X}_T^{\varepsilon}| > \delta)
		& = P\Big(\frac{1}{2} \varepsilon \int_{0}^{T} \sigma(\hat{V}^{\varepsilon}_s)^2\, ds > \delta\Big)
		\leq P\Big(\frac{1}{2} \varepsilon \int_{0}^{T} \eta(\hat{V}^{\varepsilon}_s)\, ds > \delta\Big)\\
		& \leq P\Big(\frac{1}{2} \varepsilon \int_{0}^{T} \eta(\sup_{0 \leq t \leq T} |\hat{V}^{\varepsilon}_t|)\, ds > \delta\Big)
		= P\Big(\frac{1}{2} \varepsilon T \eta(\sup_{0 \leq t \leq T} |\hat{V}^{\varepsilon}_t|) > \delta\Big)\\
		& = P\Big(\eta(\sup_{0 \leq t \leq T} |\hat{V}^{\varepsilon}_t|) > \frac{2 \delta}{\varepsilon T}\Big)
		= P\Big(\sup_{0 \leq t \leq T} |\hat{V}^{\varepsilon}_t| > \eta^{-1}(\frac{2 \delta}{\varepsilon T})\Big)\\
		& \leq \exp\Big({-\frac{\varepsilon^{-1}}{2}} J(A)\Big),
	\end{split}\end{align}
	where $ J $ is the rate function of $ \sup_{0 \leq t \leq T} |\hat{V}^{\varepsilon}_t| $,
	and $A=(\eta^{-1}(\frac{2 \delta}{\varepsilon T}), \infty)$. Since $ J $ is a good rate function, we know that $ J(x, \infty) \nearrow \infty $ as $ x \nearrow \infty $, so we get \eqref{eq:exponential equivalence drift on state space}. 
\end{proof}

We will next reason as in \cite{Gu18}, p.~1121, using the
LDP for  $ (\sqrt{\varepsilon} W_T, \sqrt{\varepsilon} B, \hat{V}^{\varepsilon}) $ in Corollary~\ref{cor:ldp for W,B,V hat}.
Analogously to~\cite{Gu18}, we define the functional $ \Phi $ on the space $ M = \mathbb{R} \times C[0, T]^2 $ by
\begin{align}\label{eq:def phi}
	\Phi(y, f, g)
	= \bar{\rho} \Big(\int_{0}^{T} \sigma(g(s))^2\, ds\Big)^{1 / 2} y
	+ \rho \int_{0}^{T} \sigma(g(s)) \dot{f}(s)\, ds,
\end{align}
if $ (f, g) = (f, \check{f}) $ with $ f \in H_0^1[0, T],$ and $ \Phi(y, f, g) = 0 $ otherwise
(recall the definition~\eqref{eq:f check (within def)}).
Further, for any integer $ m \geq 1 $, define a functional on $ M $ by
\begin{align}\label{eq:def phi_m}
	\Phi_m(y, h, l)
	= \bar{\rho} \Big(\int_{0}^{T} \sigma(l(s))^2\, ds\Big)^{1 / 2} y
	+ \rho \sum_{k = 0}^{m - 1} \sigma(l(t_k)) \big(h(t_{k + 1}) - h(t_k)\big),
\end{align}
where $ t_k := \frac{kT}{m} $ for $ k \in \{ 0, \ldots, m \} $.
The following approximation property is the key to applying the extended contraction
principle (see (4.2.24) in \cite{DeZe98}).
\begin{lemma}\label{lem:(4.2.24) for dembo zeitouni}
	For every $ \alpha > 0 $,
	\begin{align}\label{eq:(4.2.24) for dembo zeitouni}
		\limsup_{m \to \infty} \sup_{\{ f \in H_0^1[0, T] : \frac{T}{2} y^2 + \frac{1}{2} \int_{0}^{T} \dot{f}(s)^2\, ds \leq \alpha \}} |\Phi(y, f, \check{f}) - \Phi_{m}(y, f, \check{f})|
		= 0.
	\end{align}
\end{lemma}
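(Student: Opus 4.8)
The starting point is that the first summands of $\Phi$ and $\Phi_m$ coincide (both equal $\bar\rho(\int_0^T\sigma(\check f(s))^2\,ds)^{1/2}y$, since there $g=l=\check f$), so only the second summands differ. Writing $f(t_{k+1})-f(t_k)=\int_{t_k}^{t_{k+1}}\dot f(s)\,ds$ and reorganising the sum over the mesh, this leaves
\[
\Phi(y,f,\check f)-\Phi_m(y,f,\check f)=\rho\sum_{k=0}^{m-1}\int_{t_k}^{t_{k+1}}\bigl(\sigma(\check f(s))-\sigma(\check f(t_k))\bigr)\dot f(s)\,ds .
\]
The energy constraint forces $\|\dot f\|_{L^2[0,T]}\le\sqrt{2\alpha}$, hence $\int_0^T|\dot f(s)|\,ds\le\sqrt{2\alpha T}$ by Cauchy--Schwarz. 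So the task reduces to bounding $|\sigma(\check f(s))-\sigma(\check f(t_k))|$ for $s\in[t_k,t_{k+1}]$ by a quantity that is uniform over the constraint set and tends to $0$ as the mesh size $T/m$ shrinks.

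The key preparatory step is a uniform modulus of continuity for $\check f$ over the constraint set. Since $\|\dot f\|_{L^2}\le\sqrt{2\alpha}$, the controls $\dot f$ lie in the weakly compact ball $S_\alpha=\{\phi\in L^2[0,T]:\tfrac12\|\phi\|_{L^2}^2\le\alpha\}$, and by condition (H5) of~\cite{ChFi14} (verified above) the solution map of~\eqref{eq:ode for v} is continuous from $S_\alpha$, equipped with its weak topology, into $C[0,T]$. Hence $\{\Gamma(f):f\text{ admissible}\}$ is relatively compact, in particular bounded, say $\|\Gamma(f)\|_{C[0,T]}\le R$ for all admissible $f$; alternatively this follows from a Gronwall estimate using the sub-linear growth in Assumption~\ref{ass:assumptions for cir type diffusion}. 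Continuity of $U$ then yields $\|U\circ\Gamma(f)\|_{L^2[0,T]}\le N\sqrt T$ with $N:=\sup_{|u|\le R}U(u)<\infty$, and from $\check f=\mathcal K(U\circ\Gamma(f))$ together with Assumption~\ref{ass:kernel definition}(a),(b)---exactly the computation in the proof of Lemma~\ref{lem:f to f hat is continuous}---one obtains, uniformly over admissible $f$,
\[
\|\check f\|_{C[0,T]}\le\delta_0:=N\sqrt T\sup_{t\in[0,T]}\Bigl(\int_0^TK(t,s)^2\,ds\Bigr)^{1/2},\qquad|\check f(t_1)-\check f(t_2)|\le C_1|t_1-t_2|^{r/2},
\]
with $C_1:=\sqrt c\,N\sqrt T$ independent of $f$.

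It remains to insert this into the local $\omega$-continuity of $\sigma$. The range of $\check f$ over all admissible $f$ stays in a fixed compact subset of the domain of $\sigma$ (of size controlled by $\delta_0$), so Definition~\ref{def:locally omega continuity} and Assumption~\ref{ass:definition of u} supply a constant $L=L(\delta_0)$ with $|\sigma(x)-\sigma(y)|\le L\,\omega(|x-y|)$ on that set. Using that $\omega$ is increasing, for $s\in[t_k,t_{k+1}]$ this gives
\[
|\sigma(\check f(s))-\sigma(\check f(t_k))|\le L\,\omega\bigl(|\check f(s)-\check f(t_k)|\bigr)\le L\,\omega\bigl(C_1(T/m)^{r/2}\bigr),
\]
a bound independent of $k$, of $s$, and of the admissible $f$. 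Combining this with the first display and the $L^1$-bound $\int_0^T|\dot f|\le\sqrt{2\alpha T}$ yields
\[
\sup_{f}\,\bigl|\Phi(y,f,\check f)-\Phi_m(y,f,\check f)\bigr|\le|\rho|\,L(\delta_0)\,\sqrt{2\alpha T}\;\omega\bigl(C_1(T/m)^{r/2}\bigr),
\]
whose right-hand side tends to $0$ as $m\to\infty$, since $C_1(T/m)^{r/2}\to0$ and $\omega(u)\to0$ as $u\to0$. This is~\eqref{eq:(4.2.24) for dembo zeitouni}.

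The one genuinely non-routine ingredient is the uniform bound $\sup_{f\text{ admissible}}\|\Gamma(f)\|_{C[0,T]}<\infty$: everything downstream (the uniform sup-bound and uniform Hölder continuity of $\check f$, and then the single constant $L(\delta_0)$ in the local modulus-of-continuity estimate) follows mechanically from it. I expect to obtain that bound essentially for free from the continuity/compactness already invoked in checking (H5), but it is worth isolating, since it is precisely what lets one upgrade the merely \emph{local} $\omega$-continuity of $\sigma$ to an estimate that is uniform over the constraint set.
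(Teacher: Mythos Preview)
Your proof is correct and follows essentially the same route as the paper's: both hinge on a uniform bound $\sup_f\|\Gamma(f)\|_{C[0,T]}<\infty$ over the constraint set (the paper cites the Gronwall-type estimate Eq.~(16) of~\cite{ChFi14}, you invoke either that or the (H5) compactness), after which $U\circ\Gamma(f)$ is uniformly bounded and the kernel assumptions give a uniform modulus of continuity for~$\check f$. The only difference is packaging: the paper phrases this as precompactness of $E_\beta=\{\check f:f\in D_\beta\}$ in $C[0,T]$ and then defers to Lemma~21 of~\cite{Gu18}, whereas you extract the explicit H\"older bound $|\check f(t_1)-\check f(t_2)|\le C_1|t_1-t_2|^{r/2}$ and finish the estimate directly, which is slightly more self-contained.
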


\begin{proof}
	The proof is similar to that of Lemma~21 in \cite{Gu18}. We just need to change the range of the integrals and suprema to $ [0, T] $ instead of $ [0, 1] $. Hence, the grid points for $ h_m $ are $ t_k := \frac{Tk}{m} $ for $ k \in \{0, \ldots, m\} $, like in \eqref{eq:def phi_m}. We use a different integral operator
	than~\cite{Gu18}, and so we have to show that 
	the set $ E_{\beta} = \{\check{f} : f \in D_{\beta} \} $ is precompact in $ C[0, T] $ for $ D_{\beta} = \{ f \in H_0^1[0, T] : \int_{0}^{T} \dot{f}(s)^2\, ds < \beta \} $.
	For $ f \in D_\beta $, we have $ \dot{f} \in L^2[0, T] $ and therefore can use Eq.~(16) of \cite{ChFi14} to estimate the solution of the ODE
	\begin{align*}
		v
		= v_0
		+ \int_{0}^{\cdot} \bar{b}(v(s))\, ds
		+ \int_{0}^{\cdot} \bar{\sigma}(v(s)) \dot{f}(s)\, ds
	\end{align*}
	as follows:
	\begin{align*}
		\sup_{0 \leq s \leq T} |v(s)|^2
		\leq \big(3 |v_0|^2 + 6 \mu^2 T^2 + 6 \mu^2 T \| \dot{f} \|_2^2\big) e^{ 6 \mu^2 T (T + \| \dot{f} \|_2^2) }
		=: C_\beta^2.
	\end{align*}
	Here, $\mu$ comes from the sub-linear growth condition for the coefficient functions of the diffusion equation for $ V $ in Assumption~\ref{ass:assumptions for cir type diffusion}.
	Since the continuous function~$U$ is bounded on the interval $[-C_\beta,C_\beta]$,
	\begin{align}\label{eq:bounded set}
		\{ U \circ v : f \in D_{\beta},\, \dot{v} = \bar{b}(v) + \bar{\sigma}(v) \dot{f} \}
	\end{align}
	is a bounded subset of $ C[0, T] $. The compact operator $ \mathcal{K} $, as defined in \eqref{eq:op K}, maps the set in~\eqref{eq:bounded set} to a precompact set in $ C[0, T] $. So we can conclude that $ E_{\beta} $  is precompact. After that, the proof continues like in \cite{Gu18}.
\end{proof}


\begin{definition}
	Let $ t \in [0, T] $ be fixed. Consider the grid $ t_k := T \frac{k}{m} $ for $ k \in \{0, \ldots, m\} $. There is a $ k $ such that $ t \in [t_k, t_{k + 1}) $. Denote by $ \Xi(t) $ the left end of the  previous interval. Explicitly, we put
	\begin{align}
		\Xi(t) := \frac{T}{m} [\frac{mt}{T}],
	\end{align}
	where $ [a] $ stands for the integer part of the number $ a \in \mathbb{R} $.
	For $ T = 1 $, this reduces to $ \Xi(t) =  \frac{[mt]}{m} $.
\end{definition}

We will next prove that  $  \Phi_m(\sqrt{\varepsilon} W_T, \sqrt{\varepsilon} B, \hat{V}^{\varepsilon}) $ is an exponentially good approximation as $ m \nearrow \infty $ to  $  (\sqrt{\varepsilon} W_T, \sqrt{\varepsilon} B, \hat{V}^{\varepsilon}) $. We start with an auxiliary result.

\begin{lemma}\label{le:lemma 27 from Gu18_arxiv}
	For every $ y > 0, $
	\begin{align}\label{eq:lemma 27 from Gu18_arxiv} 
		\limsup_{ m \to \infty } \limsup_{\varepsilon \searrow 0} \varepsilon \log P\Big(\sup_{t \in [0, T]} |\hat{V}^{\varepsilon}_t - \hat{V}^{\varepsilon}_{\Xi(t)}| > y \Big)
		= - \infty.
	\end{align}
\end{lemma}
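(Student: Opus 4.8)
The plan is to control the oscillation of $\hat V^\varepsilon$ over a mesh of size $T/m$ by splitting it into two contributions coming from the two pieces of the Volterra convolution, and then to reduce the problem to the already-established sample path LDP for $\hat V^\varepsilon$ (Theorem~\ref{thm:ldp for epsB, Vhat}) together with the modulus-of-continuity estimate for the kernel from Assumption~\ref{ass:kernel definition}(b). Concretely, for $t\in[t_k,t_{k+1})$ write
\begin{align*}
\hat V^\varepsilon_t-\hat V^\varepsilon_{\Xi(t)}
=\int_0^{\Xi(t)}\bigl(K(t,s)-K(\Xi(t),s)\bigr)U(V^\varepsilon_s)\,ds
+\int_{\Xi(t)}^{t}K(t,s)U(V^\varepsilon_s)\,ds.
\end{align*}
The first term is, by Cauchy--Schwarz, bounded by $M(T/m)^{1/2}\bigl(\int_0^T U(V^\varepsilon_s)^2\,ds\bigr)^{1/2}$, and the second by $\bigl(\sup_{t}\int_0^T K(t,s)^2\,ds\cdot\int_{\Xi(t)}^t U(V^\varepsilon_s)^2\,ds\bigr)^{1/2}$ — here I would use $\int_{\Xi(t)}^t(\cdot)\le$ the integral over an interval of length $T/m$, but since $K(t,\cdot)=0$ beyond $t$ anyway one may also just keep the full $L^2$ norm of the kernel slice and absorb the smallness through $M$. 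The point is that both bounds are $\le \phi(T/m)\cdot\Psi(\varepsilon)$, where $\phi(h)\to0$ as $h\to0$ (using \eqref{eq:estimate for kernel's modulus of continuity}) and $\Psi(\varepsilon):=\bigl(\int_0^T U(V^\varepsilon_s)^2\,ds\bigr)^{1/2}$.

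Next I would observe that $\Psi(\varepsilon)$ is a continuous functional of the path $V^\varepsilon$ — indeed $U$ is continuous, hence bounded on compacts, so $\Psi$ is bounded by $\sqrt{T}\sup_{u}U(u)$ over the range of $V^\varepsilon$, which is controlled by $\sup_{0\le t\le T}|V^\varepsilon_t|$. Since $(\sqrt\varepsilon B,V^\varepsilon)$ satisfies an LDP with good rate function (Theorem~\ref{thm:ldp for (B,V)}), the random variable $\sup_{0\le t\le T}|V^\varepsilon_t|$ satisfies an LDP by contraction, and its rate function $J$ is good; therefore $P(\sup_t|V^\varepsilon_t|>R)\le e^{-\varepsilon^{-1}J(R,\infty)/2}$ with $J(R,\infty)\nearrow\infty$ as $R\nearrow\infty$, exactly as in the proof of Lemma~\ref{lem:omit drift scalar}. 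Fixing $R=R(y,m)$ so that $\phi(T/m)\cdot\sqrt{T}\sup_{|u|\le R}U(u)\le y$ — possible because $\phi(T/m)\to0$ — and using
\begin{align*}
P\Bigl(\sup_{t}|\hat V^\varepsilon_t-\hat V^\varepsilon_{\Xi(t)}|>y\Bigr)
\le P\bigl(\phi(T/m)\Psi(\varepsilon)>y\bigr)
\le P\Bigl(\sup_{0\le t\le T}|V^\varepsilon_t|>R\Bigr)
\le \exp\bigl(-\tfrac{\varepsilon^{-1}}{2}J(R,\infty)\bigr),
\end{align*}
we get $\limsup_{\varepsilon\searrow0}\varepsilon\log P(\cdots)\le -\tfrac12 J(R(y,m),\infty)$, and then letting $m\to\infty$ forces $R(y,m)\to\infty$ and hence the right-hand side to $-\infty$, which is \eqref{eq:lemma 27 from Gu18_arxiv}.

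The main obstacle I anticipate is making the reduction to $\sup_t|V^\varepsilon_t|$ genuinely clean: one must be careful that $R(y,m)$ can indeed be chosen to blow up as $m\to\infty$ (this hinges on $\phi(T/m)=\Oh((T/m)^{r/2})\to0$, so for fixed $y$ we need $\sup_{|u|\le R}U(u)$ to grow slower than $\phi(T/m)^{-1}$, i.e.\ $R$ can be taken $\to\infty$), and that the chain of inequalities above is valid on the full event, not just in expectation. A secondary technical point is that $U$ need not be monotone, so "$\sup_{|u|\le R}U(u)$" should be used rather than "$U(R)$"; this is harmless since $U$ is continuous hence locally bounded. Everything else is a repackaging of the argument already used for Lemma~\ref{lem:omit drift scalar}, now applied to the oscillation functional instead of the supremum functional, and in fact one could alternatively invoke exponential tightness of $\hat V^\varepsilon$ in $C[0,T]$ (which follows from its LDP with good rate function) to get an equi-continuity statement directly, but the explicit kernel estimate above seems the most transparent route.
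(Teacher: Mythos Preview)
Your proposal is correct and follows essentially the same route as the paper: bound the oscillation by $M(T/m)^{1/2}$ times a functional of $V^\varepsilon$, then apply the contraction principle to the LDP for $V^\varepsilon$ and use goodness of the resulting rate function. The paper streamlines your argument in two minor ways—it avoids the two-term splitting by using $K(\Xi(t),s)=0$ for $s>\Xi(t)$ from the outset, and it applies the contraction principle directly to $f\mapsto\sup_{s}|U(f(s))|$ rather than to $f\mapsto\sup_t|f(t)|$, which sidesteps your inversion step of choosing $R(y,m)$.
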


\begin{proof}
	This corresponds to Lemma~23 in \cite{Gu18}, but we need to adjust some
	estimates in the proof, since we do \emph{not} have Gaussianity in our setting. 
	As in \cite{Gu18} we use
	\begin{align}\label{eq:lemma 27 1st estimate}
		P\Big(\sup_{t \in [0, T]} |\hat{V}^{\varepsilon}_t - \hat{V}^{\varepsilon}_{\Xi(t)}| > y \Big)
		\leq P\bigg(\sup_{\substack{t_1, t_2 \in [0, T]\\ |t_2-t_1|\leq T / m}} |\hat{V}^{\varepsilon}_{t_2} - \hat{V}^{\varepsilon}_{t_1}| > y\bigg).
	\end{align}
	Then, for $ |s - t| \leq T / m $, we have
	\begin{align*}\begin{split}
		|\hat{V}^{\varepsilon}_t - \hat{V}^{\varepsilon}_s|
		&= \Big|\int_{0}^{T} \big(K(t, v) - K(s, v)\big) U(V_v^{\varepsilon})\, dv\Big|\\
		&\leq \sqrt{M(\frac{T}{m})} \sup_{v \in [0, T]} |U(V_v^{\varepsilon})|\\
		&\leq \Big(\frac{c T}{m}\Big)^{r / 2} \sup_{v \in [0, T]} |U(V_v^{\varepsilon})|,
	\end{split}\end{align*}
	where $ M $ is the modulus of continuity of the kernel function in  Assumption~\ref{ass:kernel definition}. We know that $  V^{\varepsilon} $ satisfies an LDP, by Theorem~\ref{thm:ldp for (B,V)}. Using this, we can estimate
	\begin{align*}
		P\Big(\sup_{t \in [0, T]} |\hat{V}^{\varepsilon}_t - \hat{V}^{\varepsilon}_{\Xi(t)}| > y \Big) &\leq
		 P\Big(\sup_{s \in [0, T]} |U(V_s^{\varepsilon})| > y c^{-r/2} T^{-r/2} m^{r/2} \Big) \\
		&\leq \exp\Big({-\frac{\varepsilon^{-1}}{2}} \cdot J\big((y (\frac{m}{cT} )^{\frac{r}{2}}, \infty)\big)\Big),
	\end{align*}
	for $ \varepsilon $ small enough. Here, $ J $ is the good rate function corresponding to $ \sup_{s \in [0, T]} |U(V_s^{\varepsilon})|, $ which satisfies an LDP, as seen from applying the contraction principle to the continuous mapping $ f \mapsto \sup_{s \in [0, T]} |U(f(s))| $.
	From this, we can write
	\begin{align}
		\limsup_{\varepsilon \searrow 0} \varepsilon \log P\Big(\sup_{t \in [0, T]} |\hat{V}^{\varepsilon}_t - \hat{V}^{\varepsilon}_{\Xi(t)}| > y\Big)
		\leq - \frac{1}{2} J\Big(\big(y \big(\frac{m}{cT} \big)^{\frac{r}{2}}, \infty\big)\Big).
	\end{align}
	Since $ J $ has compact level sets, the term on the right-hand side explodes
	for $ m \nearrow \infty $.
\end{proof}

Next, we show that the discretization functionals~$\Phi_m$
yield an exponentially good approximation.

\begin{lemma}\label{lem:exponential equivalence for scalar case}
	For every $ \delta > 0 $,
	\begin{align}
		\lim\limits_{m \to \infty} \limsup_{\varepsilon \searrow 0} \varepsilon \log P(|\Phi(\sqrt{\varepsilon}W_T, \sqrt{\varepsilon} B, \hat{V}^{\varepsilon}) - \Phi_{m}(\sqrt{\varepsilon}W_T, \sqrt{\varepsilon} B, \hat{V}^{\varepsilon})| > \delta) = - \infty.	
	\end{align}
\end{lemma}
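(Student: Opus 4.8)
The plan is to split the difference $\Phi - \Phi_m$ evaluated at $(\sqrt{\varepsilon}W_T,\sqrt{\varepsilon}B,\hat V^\varepsilon)$ into two parts according to the two summands in the definitions~\eqref{eq:def phi} and~\eqref{eq:def phi_m}. The first summand, $\bar\rho\big(\int_0^T\sigma(\hat V^\varepsilon_s)^2\,ds\big)^{1/2}\sqrt{\varepsilon}W_T$, is \emph{identical} in $\Phi$ and $\Phi_m$, so it contributes nothing and can be discarded immediately. Hence the whole task reduces to controlling
\begin{align*}
	\Delta^\varepsilon_m := \rho\Big(\int_0^T \sigma(\hat V^\varepsilon_s)\,d(\sqrt{\varepsilon}B)_s - \sum_{k=0}^{m-1}\sigma(\hat V^\varepsilon_{t_k})\big(\sqrt{\varepsilon}B_{t_{k+1}} - \sqrt{\varepsilon}B_{t_k}\big)\Big),
\end{align*}
i.e.\ the error of a left-point Riemann--Stieltjes approximation of the stochastic integral $\sqrt{\varepsilon}\int_0^T\sigma(\hat V^\varepsilon_s)\,dB_s$. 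This is exactly the structure handled in~\cite{Gu18} for the Gaussian driver, so I would follow that argument and point to the places where non-Gaussianity forces a change.

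First I would rewrite $\Delta^\varepsilon_m/\rho$ as a single stochastic integral $\sqrt{\varepsilon}\int_0^T\big(\sigma(\hat V^\varepsilon_s) - \sigma(\hat V^\varepsilon_{\Xi(s)})\big)\,dB_s$ (using that the discrete sum is itself $\sqrt{\varepsilon}\int_0^T\sigma(\hat V^\varepsilon_{\Xi(s)})\,dB_s$). Then, for a large threshold $y>0$ to be chosen, I would condition on the ``good'' event $G^\varepsilon_{m,y}=\{\sup_{t\in[0,T]}|\hat V^\varepsilon_t - \hat V^\varepsilon_{\Xi(t)}|\le y\}$. On $G^\varepsilon_{m,y}$ the integrand is bounded in modulus by $L(\delta_0)\,\omega(y)$ for a suitable $\delta_0$ controlling the range of $\hat V^\varepsilon$, by the local $\omega$-continuity of $\sigma$ from Assumption~\ref{ass:definition of u}; so a standard exponential (Bernstein-type) martingale inequality for the continuous martingale $\sqrt{\varepsilon}\int_0^\cdot(\sigma(\hat V^\varepsilon)-\sigma(\hat V^\varepsilon_{\Xi}))\,dB$, whose quadratic variation on $G^\varepsilon_{m,y}$ is at most $\varepsilon T L(\delta_0)^2\omega(y)^2$, yields
\begin{align*}
	P\big(|\Delta^\varepsilon_m|>\delta,\ G^\varepsilon_{m,y}\big)
	\le 2\exp\!\Big(-\frac{\delta^2}{2\rho^2\varepsilon T L(\delta_0)^2\omega(y)^2}\Big).
\end{align*}
(To make ``suitable $\delta_0$'' rigorous one also needs to intersect with a further event $\{\sup_t|\hat V^\varepsilon_t|\le\delta_0\}$ whose complement is exponentially negligible because $\sup_t|\hat V^\varepsilon_t|$ satisfies an LDP with a good rate function, exactly as used in Lemma~\ref{lem:omit drift scalar}; I would absorb this bookkeeping into the good event.) Taking $\varepsilon\searrow0$ gives $\limsup_\varepsilon\varepsilon\log P(\cdot)\le -\delta^2/(2\rho^2 TL(\delta_0)^2\omega(y)^2)$, which is a finite negative number for fixed $y$.

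On the complementary ``bad'' event $(G^\varepsilon_{m,y})^c$, Lemma~\ref{le:lemma 27 from Gu18_arxiv} gives precisely $\limsup_{m\to\infty}\limsup_{\varepsilon\searrow0}\varepsilon\log P((G^\varepsilon_{m,y})^c)=-\infty$. Combining the two contributions via the standard ``$\max$ of two exponential rates'' principle, we obtain
\begin{align*}
	\limsup_{\varepsilon\searrow0}\varepsilon\log P(|\Delta^\varepsilon_m|>\delta)
	\le \max\Big\{-\frac{\delta^2}{2\rho^2 TL(\delta_0)^2\omega(y)^2},\ \limsup_{\varepsilon\searrow0}\varepsilon\log P((G^\varepsilon_{m,y})^c)\Big\},
\end{align*}
and then letting $m\to\infty$ first (killing the second term) and $y\searrow0$ afterwards (using $\omega(y)\to0$, so the first term $\to-\infty$) yields the claim. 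The main obstacle is the step where $\sigma$ is merely locally $\omega$-continuous rather than Lipschitz: one must be careful that the ``good'' event simultaneously controls (i) the oscillation $\hat V^\varepsilon - \hat V^\varepsilon_{\Xi}$ at scale $y$ and (ii) the overall range of $\hat V^\varepsilon$ so that a single Hölder-type constant $L(\delta_0)$ applies, and that the resulting quadratic-variation bound is uniform enough to feed the exponential martingale inequality — this is the only genuine deviation from the Gaussian argument in~\cite{Gu18}, where the increments are handled by Gaussian concentration instead.
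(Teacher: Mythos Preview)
Your proposal is correct and follows essentially the same route as the paper. The paper also observes that the $\bar\rho$-terms in $\Phi$ and $\Phi_m$ cancel, reduces to controlling $\sqrt{\varepsilon}\int_0^T\sigma_s^{(m)}\,dB_s$ with $\sigma_s^{(m)}=\sigma(\hat V^\varepsilon_s)-\sigma(\hat V^\varepsilon_{\Xi(s)})$, bounds the integrand via the local $\omega$-continuity of~$\sigma$, applies the exponential martingale inequality on a ``good'' configuration, and handles the complement by Lemma~\ref{le:lemma 27 from Gu18_arxiv} together with the LDP for $\sup_t|\hat V^\varepsilon_t|$ (replacing the Gaussian concentration used in~\cite{Gu18}). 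The only cosmetic difference is that the paper, following~\cite{Gu18}, packages your two good events into a single stopping time
\[
\xi_\eta^{(m)}=\inf\Big\{t:\tfrac{\eta}{q(\eta)}|\hat V^\varepsilon_t|+|\hat V^\varepsilon_t-\hat V^\varepsilon_{\Xi(t)}|>\eta\Big\}\wedge T,
\]
with $q(\eta)\nearrow\infty$ as $\eta\searrow0$, so that one parameter $\eta$ simultaneously controls range and oscillation; your separate $\delta_0$ and $y$ parameters achieve the same thing. Note that the paper actually proves the stronger statement with $\sup_{t\in[0,T]}$ in front of the stochastic integral, for later reuse in the path-space LDP; your Bernstein-type inequality already delivers this for free.
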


\begin{proof}
	This lemma corresponds to Lemma~22 in \cite{Gu18}. 
	As in the proof of that lemma, it suffices to show
	\begin{align}\label{eq:stronger condition for exponential equivalence}
		\lim\limits_{m \to \infty} \limsup_{\varepsilon \searrow 0} \varepsilon \log P\bigg(\sqrt{\varepsilon} |\rho| \sup_{t \in [0, T]} \Big|\int_{0}^{t} \sigma_s^{(m)}\, dB_s\Big| > \delta\bigg)
		= -\infty,
	\end{align}
	where $ \sigma_t^{(m)} = \sigma(\hat{V}^{\varepsilon}_t) - \sigma(\hat{V}^{\varepsilon}_{\Xi(t)}) $.	We have to redefine $ \xi_{\eta}^{(m)} $ in order to take a general $ T > 0 $ into account:
	\begin{align*}
		\xi_{\eta}^{(m)}
		= \inf \Big\{ t \in [0, T] : \frac{\eta}{q(\eta)} |\hat{V}^{\varepsilon}| + |\hat{V}^{\varepsilon}_t - \hat{V}^{\varepsilon}_{\Xi(t)}| > \eta \Big\} \wedge T.
	\end{align*}
	Note that we use the convention $ \inf \emptyset = \infty $ here.
	The equations (55)--(65) in \cite{Gu18}  remain the same, except that we replace $ \varepsilon^H \hat{B} $ by $ \hat{V}^{\varepsilon} $ and use our redefined versions of $ \sigma^{(m)} $ and $ \xi_{\eta}^{(m)} $. Thus,
	formula~(65) in \cite{Gu18} can be applied. The estimates~(66) and (67) have to be replaced by
	\begin{align*}
		P\bigg(\sqrt{\varepsilon} |\rho| \sup_{t \in [0, T]} \Big|\int_{0}^{t} \sigma_s^{(m)}\, dB_s\Big| > \delta\bigg)
		\leq P(\xi_{\eta}^{(m)} < T)
		+ P\bigg(\sqrt{\varepsilon} |\rho| \sup_{t \in [0, \xi_{\eta}^{(m)}]} \Big|\int_{0}^{t} \sigma_s^{(m)}\, dB_s\Big| > \delta\bigg)
	\end{align*}
	and
	\begin{align}\begin{split}
		P(\xi_{\eta}^{(m)} < T)
		& \leq P\Big(\sup_{t \in [0, T]} \big(\frac{\eta}{q(\eta)} |\hat{V}_t^{\varepsilon}| + |\hat{V}_t^{\varepsilon} - \hat{V}^{\varepsilon}_{\Xi(t)}|\big) > \eta \Big)\\
		& \leq P\Big(\sup_{t \in [0, T]} |\hat{V}^{\varepsilon}_t| > \frac{q(\eta)}{2}\Big)
		+ P\Big(\sup_{t \in [0, T]} |\hat{V}^{\varepsilon}_t - \hat{V}^{\varepsilon}_{\Xi(t)}| > \frac{\eta}{2}\Big).
	\end{split}\end{align}
	Using Lemma~\ref{le:lemma 27 from Gu18_arxiv}, we can handle the second term, and so it remains to find an appropriate estimate for the first term. Here we need to adapt the reasoning
	in \cite{Gu18} because of the lack of Gaussianity.
	By the LDP for $  \hat{V}^{\varepsilon} $ and the contraction principle applied to the mapping $ f \mapsto \sup_{t \in [0, T]} |f(t)| ,$ we get
	\begin{align}
		P\Big(\sup_{t \in [0, T]} |\hat{V}^{\varepsilon}_t| > \frac{q(\eta)}{2}\Big)
		\leq \exp\Big({-\frac{\varepsilon^{-1}}{2}} \cdot  I_{\sup}\big((\tfrac12 q(\eta), \infty)\big)\Big),
	\end{align}
	for $ \varepsilon > 0 $ small enough, where $ I_{\sup} $ is the rate function of  $  \sup_{t \in [0, T]} |\hat{V}^{\varepsilon}_t| $.
	Note that $ q(\eta) \nearrow \infty $ for $ \eta \searrow 0 $. So, we get
	\begin{align}\label{eq:estimate for eta term 2}
		\limsup_{\eta \searrow 0} \limsup_{\varepsilon \searrow 0} \varepsilon \log P\Big(\sup_{t \in [0, T]} |\hat{V}_t^{\varepsilon}| > \frac{q(\eta)}{2}\Big)
		= - \infty.
	\end{align}
	Using \eqref{eq:lemma 27 from Gu18_arxiv} and \eqref{eq:estimate for eta term 2}, we get (73) and (74) of \cite{Gu18}. Finally, we can complete the proof  as in \cite{Gu18}.
\end{proof}

Let us continue the proof of Theorem~\ref{thm:ldp for log-price (with W_T)}. Lemma~\ref{lem:(4.2.24) for dembo zeitouni} states that condition~(4.2.24) in \cite{DeZe98} is satisfied. Furthermore, due to Lemma~\ref{lem:exponential equivalence for scalar case}, we know that $ \Phi_m(\sqrt{\varepsilon} W_T, \sqrt{\varepsilon} B, \hat{V}^{\varepsilon}) $ is an exponentially good approximation of $ \Phi(\sqrt{\varepsilon} W_T, \sqrt{\varepsilon} B, \hat{V}^{\varepsilon}) $ as $ m \nearrow \infty $. Hence, we can use the extended contraction principle (Theorem~4.2.23 in \cite{DeZe98}), and get that $ \hat{X}^{\varepsilon}_T $ satisfies an LDP with good rate function $ I $ and speed $ \varepsilon^{-1} $.
We know from Lemma~\ref{lem:omit drift scalar} that $ \hat{X}^{\varepsilon}_T $ and $ X^{\varepsilon}_T $ are exponentially equivalent, and so we finally arrive at  Theorem~\ref{thm:ldp for log-price (with W_T)}.

According to the extended contraction principle, we have
\begin{align*}
	I_T(y)
	= \inf \big\{ \hat{I}(x, f, g) : y = \Phi(x, f, g) \big\}.
\end{align*}
The rate function $ \hat{I} $ is only finite for
\begin{align*}
	\hat{I}\big(y, f, \mathcal{K}(U \circ \Gamma(f))\big)
	= \frac{T}{2} y^2
	+ \frac{1}{2} \langle \dot{f}, \dot{f} \rangle.
\end{align*}
Note that  $ \Gamma $ is  the one-dimensional solution map that takes $ f $ to the solution of the ODE $ \dot{v} = \bar{b}(v) + \bar{\sigma}(v) \dot{f} $, $ v(0) = v_0 $.
Recall that the function $ \Phi $ can be written as
\begin{align*}
	\Phi(y, f, g)
	= \bar{\rho} \sqrt{\langle \sigma(g)^2, 1 \rangle} y
	+ \rho \langle \sigma(g), \dot{f} \rangle.
\end{align*}
Hence, if $ x = \Phi(y, f, g) $, then
\begin{align*}
	y
	= \frac{x - \rho \langle \sigma(g), \dot{f} \rangle}{\bar{\rho} \sqrt{\langle \sigma(g)^2, 1\rangle}}.
\end{align*}
Inserting this into the rate function obtained through the contraction principle, we get
\begin{align}\begin{split}
	I_T(y)
	& = \inf \big\{ \hat{I}(x, f, g) : y = \Phi(x, f, g), ~f \in H_0^1, ~g = \mathcal{K}(U \circ \Gamma(f)) \big\}\\
	& = \inf \Big\{ \frac{T}{2} y^2 + \frac{1}{2} \langle \dot{f}, \dot{f} \rangle : y = \frac{x - \rho \langle \sigma(\mathcal{K}(U \circ \Gamma(f))), \dot{f} \rangle}{\bar{\rho} \sqrt{\langle \sigma(\mathcal{K}(U \circ \Gamma(f)))^2, 1\rangle}}, f \in H_0^1 \Big\}\\
	& = \inf_{f \in H_0^1} \Big\{ \frac{T}{2} \Big(\frac{x - \rho \langle \sigma(\mathcal{K}(U \circ \Gamma(f))), \dot{f} \rangle}{\bar{\rho} \sqrt{\langle \sigma(\mathcal{K}(U \circ \Gamma(f)))^2, 1\rangle}}\Big)^2 + \frac{1}{2} \langle \dot{f}, \dot{f} \rangle \Big\}.
\end{split}\end{align}

\subsection{Proof of Theorem~\ref{thm:ldp for log-price} (a sample path LDP)}

We adapt the arguments on pp.~8--11 in~\cite{Gu19}. 
As in the preceding section, our starting point is that we already have an LDP for $ (\sqrt{\varepsilon} W, \sqrt{\varepsilon} B, \hat{V}^{\varepsilon}) $, see Corollary~\ref{cor:ldp for W,B,V hat}.
We redefine the functions $ \Phi $ and $ \Phi_m $  so that they map  $ C[0, T]^3 $ to $ C[0, T] $. For $ l \in H_0^1[0, T] $ and $ (f, g) \in C[0, T]^2 $ such that $ f \in H_0^1[0, T] $ and $ g = \check{f} $,
\begin{align}
	\Phi(l, f, g)(t)
	= \bar{\rho} \int_{0}^{t} \sigma(\check{f}(s)) \dot{l}(s)\, ds
	+ \rho \int_{0}^{t} \sigma(\check{f}(s)) \dot{f}(s)\, ds,
	\quad 0 \leq t \leq T.
\end{align}
In addition, for all the remaining triples $ (l, f, g) $, we set $ \Phi(l, f, g)(t) = 0 $ for all $ t \in [0, T] $.
By the following lemma, we can remove the drift term.

\begin{lemma}\label{lem:omit drift path space}
	The families of processes $  X^\varepsilon $ and $  \hat{X}^{\varepsilon} $ are exponentially equivalent, i.e.\ for every $ \delta > 0 $, 
	the following equality holds:
	\begin{align}\label{eq:exponential equivalence drift on path space}
		\limsup_{\varepsilon \searrow 0} \varepsilon \log P(\|X^{\varepsilon} - \hat{X}^{\varepsilon}\|_{C[0, T]} > \delta)
		= - \infty.
	\end{align}
	Here, $ \hat{X}^{\varepsilon} $ is defined in~\eqref{eq:X hat}.
\end{lemma}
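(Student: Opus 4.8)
The plan is to show that the difference $X^\varepsilon - \hat X^\varepsilon$, which equals the drift term $-\tfrac12\varepsilon\int_0^\cdot \sigma(\hat V^\varepsilon_s)^2\,ds$, is negligible on the exponential scale uniformly in $t\in[0,T]$. This is the path-space analogue of Lemma~\ref{lem:omit drift scalar}, and the key observation is that the supremum over $t$ of this monotone, non-negative quantity is attained at $t=T$, so it reduces to exactly the scalar estimate already proved. Concretely, I would first write, as in~\eqref{eq:omit drift estimate},
\begin{align*}
	\|X^\varepsilon - \hat X^\varepsilon\|_{C[0,T]}
	= \sup_{0\leq t\leq T} \frac{1}{2}\varepsilon\int_0^t \sigma(\hat V^\varepsilon_s)^2\,ds
	= \frac{1}{2}\varepsilon\int_0^T \sigma(\hat V^\varepsilon_s)^2\,ds,
\end{align*}
where the last equality holds because $\sigma^2\geq 0$ makes $t\mapsto\int_0^t\sigma(\hat V^\varepsilon_s)^2\,ds$ nondecreasing.

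Next I would reuse the bound $\bar\sigma(u)^2\leq\eta(u)$ — wait, in the present context the relevant function is not $\bar\sigma$ but rather we need to dominate $\sigma(\hat V^\varepsilon_s)^2$; following Section~5 of~\cite{Gu18} (as invoked in the proof of Lemma~\ref{lem:omit drift scalar}) there is a strictly increasing continuous $\eta:[0,\infty)\to[0,\infty)$ with $\eta(u)\nearrow\infty$ and $\sigma(u)^2\leq\eta(u)$ for all $u\in\mathbb{R}$, obtained from the local $\omega$-continuity of $\sigma$ in Assumption~\ref{ass:definition of u}. Then, exactly as in~\eqref{eq:omit drift estimate},
\begin{align*}
	P(\|X^\varepsilon - \hat X^\varepsilon\|_{C[0,T]} > \delta)
	= P\Big(\sup_{0\leq t\leq T}|\hat V^\varepsilon_t| > \eta^{-1}\big(\tfrac{2\delta}{\varepsilon T}\big)\Big)
	\leq \exp\Big({-}\tfrac{\varepsilon^{-1}}{2} J\big(\big(\eta^{-1}(\tfrac{2\delta}{\varepsilon T}),\infty\big)\big)\Big),
\end{align*}
where $J$ is the good rate function for $\sup_{0\leq t\leq T}|\hat V^\varepsilon_t|$, which satisfies an LDP by Theorem~\ref{thm:ldp for epsB, Vhat} and the contraction principle applied to $f\mapsto\sup_{t\in[0,T]}|f(t)|$. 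Since $J$ has compact level sets, $J((x,\infty))\nearrow\infty$ as $x\nearrow\infty$, and since $\eta^{-1}(\tfrac{2\delta}{\varepsilon T})\nearrow\infty$ as $\varepsilon\searrow 0$, taking $\varepsilon\log$ and then $\limsup_{\varepsilon\searrow 0}$ yields~\eqref{eq:exponential equivalence drift on path space}.

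Since the argument is essentially a verbatim repetition of the proof of Lemma~\ref{lem:omit drift scalar} with the single extra (trivial) remark that the sup over $t$ sits at $t=T$, I do not anticipate any real obstacle; the only point requiring a line of care is justifying the domination $\sigma(u)^2\leq\eta(u)$ for a suitable $\eta$, which is already used implicitly in Lemma~\ref{lem:omit drift scalar} and follows from continuity and positivity of $\sigma$ together with its local $\omega$-modulus of continuity. I would simply cite that construction rather than redo it.
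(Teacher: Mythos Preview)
Your proof is correct and follows essentially the same route as the paper: both reduce the path-space estimate to the scalar one by noting that the drift term is monotone in~$t$, so the sup-norm equals the value at~$T$, and then invoke the chain of inequalities from Lemma~\ref{lem:omit drift scalar}. One cosmetic slip: in your displayed chain the first ``$=$'' between $P(\|X^\varepsilon-\hat X^\varepsilon\|_{C[0,T]}>\delta)$ and $P(\sup_t|\hat V^\varepsilon_t|>\eta^{-1}(\tfrac{2\delta}{\varepsilon T}))$ should be ``$\leq$'', since the passage from $\sigma^2$ to $\eta$ is a domination, not an identity.
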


\begin{proof}
  By taking into account the proof of Lemma~\ref{lem:omit drift scalar}, we see that just
  one additional estimate is needed, namely
\begin{align*}
	\| X^{\varepsilon} - \hat{X}^{\varepsilon} \|_{C[0, T]}
	= \sup_{0 \leq t \leq T} | X^{\varepsilon}_t - \hat{X}^{\varepsilon}_t |
	\leq \frac{1}{2} \varepsilon T \eta\big(\sup_{0 \leq t \leq T} |\hat{V}^{\varepsilon}_t|\big).
\end{align*}
Then we directly get
\begin{align*}
	P(\| X^{\varepsilon} - \hat{X}^{\varepsilon} \| > \delta)
	\leq P\Big(\frac{1}{2} \varepsilon T \eta\big(\sup_{0 \leq t \leq T} |\hat{V}^{\varepsilon}_t|\big) > \delta\Big)
	= P\Big(\sup_{0 \leq t \leq T} |\hat{V}^{\varepsilon}_t| > \eta^{-1}\big(\frac{2 \delta}{\varepsilon T}\big) \Big),
\end{align*}
which is exactly the same expression as in the proof of \eqref{eq:exponential equivalence drift on state space}.
\end{proof}

The sequence of functionals $ (\Phi_m)_{m \geq 1} $ from $ C[0, T]^3 $ to $ C[0, T] $ is given for $ (r, h, l) \in C[0, T]^3 $ and $ t \in [0, T] $ by
\begin{align}\begin{split}
	\Phi_m(r, h, l)(t)
	= \bar{\rho} \bigg(\sum_{k = 0}^{[\frac{mt}{T} - 1]} \sigma(l(t_k)) [r(t_{k + 1}) - r(t_k)]
	+ \sigma\big(l(\Xi(t))\big) \big[r(t) - r(\Xi(t))\big]\bigg)\\
	+ \rho \bigg(\sum_{k = 0}^{[\frac{mt}{T} - 1]} \sigma(l(t_k)) [h(t_{k + 1}) - h(t_k)] + \sigma\big(l(\Xi(t))\big) \big[h(t) - h(\Xi(t))\big]\bigg).
\end{split}\end{align}
It is not hard to see that for every $ m \geq 1 $, the mapping $ \Phi_m $ is continuous.

\begin{lemma}\label{lem:dembo zeitoungi (4.2.24) for path space}
	For every $ \zeta > 0 $ and $ y > 0 $,
	\begin{align}
		\limsup_{m \nearrow \infty} \sup_{\{(r, f) \in H_0^1[0, T]^2 :\,  \frac{1}{2} \int_{0}^{T} \dot{r}(s)\, ds + \frac{1}{2} \int_{0}^{T} \dot{f}(s)\, ds \leq \zeta\}} \| \Phi(r, f, \check{f}) - \Phi_m(r, f, \check{f}) \|_{C[0, T]^2}
		= 0.
	\end{align}
\end{lemma}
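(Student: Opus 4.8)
The plan is to mimic the proof of Lemma~\ref{lem:(4.2.24) for dembo zeitouni}, now keeping track of two driving Brownian paths simultaneously instead of one. First I would fix $\zeta>0$ and $y>0$ and restrict attention to the level set $\mathcal{L}_\zeta = \{(r,f)\in H_0^1[0,T]^2 : \tfrac12\|\dot r\|_2^2 + \tfrac12\|\dot f\|_2^2\le\zeta\}$; note that on this set both $\|\dot r\|_2$ and $\|\dot f\|_2$ are bounded by $\sqrt{2\zeta}$, so in particular the ODE solution $v=\Gamma(f)$ satisfies the uniform bound $\sup_{[0,T]}|v|\le C_\zeta$ exactly as in the proof of Lemma~\ref{lem:(4.2.24) for dembo zeitouni} (via Eq.~(16) of \cite{ChFi14}), and hence $\check f = \mathcal{K}(U\circ\Gamma(f))$ ranges over a precompact subset $E_\zeta\subset C[0,T]$ by compactness of $\mathcal{K}$ together with boundedness of $U$ on $[-C_\zeta,C_\zeta]$.

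Next I would write the difference $\Phi(r,f,\check f) - \Phi_m(r,f,\check f)$ explicitly. Since $\Phi(r,f,\check f)(t) = \bar\rho\int_0^t\sigma(\check f(s))\dot r(s)\,ds + \rho\int_0^t\sigma(\check f(s))\dot f(s)\,ds$, and $\Phi_m$ replaces $\sigma(\check f(s))$ by the piecewise-constant function $\sigma(\check f(\Xi(s)))$ (with the grid $t_k = kT/m$), the $\bar\rho$-part of the difference is bounded in sup-norm by $\int_0^T|\sigma(\check f(s))-\sigma(\check f(\Xi(s)))|\,|\dot r(s)|\,ds \le \|\dot r\|_2\,\big(\int_0^T|\sigma(\check f(s))-\sigma(\check f(\Xi(s)))|^2\,ds\big)^{1/2}$ by Cauchy--Schwarz, and similarly for the $\rho$-part with $\dot f$. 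Both prefactors are $\le\sqrt{2\zeta}$ on $\mathcal{L}_\zeta$, so it suffices to show
\begin{align*}
	\limsup_{m\to\infty}\ \sup_{(r,f)\in\mathcal{L}_\zeta}\ \int_0^T \big|\sigma(\check f(s)) - \sigma(\check f(\Xi(s)))\big|^2\,ds = 0.
\end{align*}
This is where the local $\omega$-continuity of $\sigma$ (Assumption~\ref{ass:definition of u}) and the precompactness of $E_\zeta$ enter: all $\check f$ in $E_\zeta$ take values in some fixed compact $[-\delta_\zeta,\delta_\zeta]$, so $|\sigma(\check f(s))-\sigma(\check f(\Xi(s)))|\le L(\delta_\zeta)\,\omega(|\check f(s)-\check f(\Xi(s))|)$, and precompactness of $E_\zeta$ in $C[0,T]$ gives a uniform (over $E_\zeta$) modulus of continuity, say $|\check f(s)-\check f(u)|\le\varpi_\zeta(|s-u|)$ with $\varpi_\zeta(h)\to0$ as $h\to0$. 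Since $|s-\Xi(s)|\le T/m$, the integrand is bounded by $L(\delta_\zeta)^2\,\omega(\varpi_\zeta(T/m))^2$, which tends to $0$ uniformly in $(r,f)\in\mathcal{L}_\zeta$ as $m\to\infty$. This establishes the claim, and hence the lemma.

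The main obstacle, and the point where care is needed, is the uniform equicontinuity of the family $\{\check f : (r,f)\in\mathcal{L}_\zeta\}$: one must argue that precompactness of $E_\zeta$ in the uniform topology — which follows from Arzelà--Ascoli applied to the image of a bounded set under the compact operator $\mathcal{K}$, combined with the a~priori ODE bound $C_\zeta$ — indeed yields a single modulus $\varpi_\zeta$ controlling every $\check f$ in the family simultaneously. (Alternatively one can use the explicit Hölder estimate $|\hat f(t_1)-\hat f(t_2)|\le C_f|t_1-t_2|^{r/2}$ from the proof of Lemma~\ref{lem:f to f hat is continuous}, noting that the constant $C_f$ depends only on $\sup|U\circ\Gamma(f)|\le\sup_{|x|\le C_\zeta}U(x)$ and hence is uniform over $\mathcal{L}_\zeta$; this gives $\varpi_\zeta(h)=\mathrm{const}\cdot h^{r/2}$ directly, which is perhaps the cleanest route.) Everything else — the Cauchy--Schwarz splitting, the bound on the prefactors, the passage to the $L^2$ statement displayed above — is routine, and the rest of the argument then proceeds exactly as in \cite{Gu18}.
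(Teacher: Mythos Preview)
Your proposal is correct and follows essentially the same approach as the paper: both rely on the precompactness of $E_\zeta=\{\check f:(r,f)\in\mathcal L_\zeta\}$ (via the ODE bound from \cite{ChFi14} and compactness of~$\mathcal K$), then use the local $\omega$-continuity of~$\sigma$ together with a uniform modulus of continuity on $E_\zeta$ to control $\sigma(\check f(s))-\sigma(\check f(\Xi(s)))$, exactly as in Lemma~\ref{lem:(4.2.24) for dembo zeitouni} and Lemma~21 of~\cite{Gu18}. The paper's proof simply cites those earlier arguments and notes that the extra supremum over~$r$ is harmless because the bound on the integrand is uniform; you spell this out via the Cauchy--Schwarz splitting and the explicit prefactors $\|\dot r\|_2,\|\dot f\|_2\le\sqrt{2\zeta}$, which is the same observation made concrete.
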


\begin{proof}
	Lemma~\ref{lem:dembo zeitoungi (4.2.24) for path space}
	can be obtained from the proofs of Lemma~\ref{lem:(4.2.24) for dembo zeitouni},  Lemma~21 in \cite{Gu18} and Lemma~2.13 in \cite{Gu19}. The only difference here is, that the supremum is taken over two functions from $ D_\eta  = \{ w \in H_0^1[0, T] : \int_{0}^{T} \dot{w}^2\, ds \leq \eta \} $. By the uniform bound in the proof of Lemma~21 of \cite{Gu18}, this is actually irrelevant.
\end{proof}

Next, we will show  that the family $ \Phi_m(\sqrt{\varepsilon} W, \sqrt{\varepsilon} B, \hat{V}^{\varepsilon}) $ is an exponentially good approximation for $ \Phi(\sqrt{\varepsilon} W, \sqrt{\varepsilon} B, \hat{V}^{\varepsilon}) $, as $ m \nearrow \infty $.

\begin{lemma}\label{lem:exponentially good approximation on path space}
	For every $ \delta > 0 $
	\begin{align}
		\lim\limits_{m \to \infty} \limsup_{\varepsilon \searrow 0} \varepsilon \log P(\| \Phi(\sqrt{\varepsilon}W, \sqrt{\varepsilon} B, \hat{V}^{\varepsilon}) - \Phi_m(\sqrt{\varepsilon}W, \sqrt{\varepsilon} B, \hat{V}^{\varepsilon}) \|_{C[0, T]} > \delta) = -\infty.
	\end{align}
\end{lemma}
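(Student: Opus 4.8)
The plan is to follow closely the strategy used in the proof of Lemma~\ref{lem:exponential equivalence for scalar case}, now in path space. As in that lemma, it suffices to control the two stochastic-integral pieces separately; by symmetry (and since $\bar\rho\le 1$, $|\rho|\le 1$) it is enough to show that for every $\delta>0$
\begin{align*}
	\lim_{m\to\infty}\limsup_{\varepsilon\searrow 0}\varepsilon\log P\bigg(\sqrt{\varepsilon}\sup_{t\in[0,T]}\Big|\int_0^t\sigma_s^{(m)}\,dB_s\Big|>\delta\bigg)=-\infty,
\end{align*}
where $\sigma_t^{(m)}=\sigma(\hat V_t^{\varepsilon})-\sigma(\hat V_{\Xi(t)}^{\varepsilon})$ is exactly the integrand from the scalar proof. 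Indeed, the difference $\Phi-\Phi_m$ evaluated at $(\sqrt\varepsilon W,\sqrt\varepsilon B,\hat V^\varepsilon)$, taken in the sup-norm over $t\in[0,T]$, is bounded by a constant times $\sqrt\varepsilon\sup_t|\int_0^t\sigma_s^{(m)}\,dW_s|+\sqrt\varepsilon\sup_t|\int_0^t\sigma_s^{(m)}\,dB_s|$, because the telescoping Riemann sums in $\Phi_m$ plus the final partial increment reproduce exactly the piecewise version of the Itô integral $\int_0^t\sigma(\hat V^\varepsilon_{\Xi(s)})\,dB_s$. So the path-space estimate reduces to precisely the same supremum-of-a-stochastic-integral bound that was already handled in the scalar case — the only change is that the supremum over $t\in[0,T]$ is now essential (rather than just plugging in $t=T$), but the localization argument via the stopping time $\xi_\eta^{(m)}$ was already phrased with a $\sup_{t\in[0,T]}$ in Lemma~\ref{lem:exponential equivalence for scalar case}.

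Concretely, I would reuse verbatim the decomposition
\begin{align*}
	P\bigg(\sqrt{\varepsilon}|\rho|\sup_{t\in[0,T]}\Big|\int_0^t\sigma_s^{(m)}\,dB_s\Big|>\delta\bigg)
	\le P(\xi_\eta^{(m)}<T)+P\bigg(\sqrt{\varepsilon}|\rho|\sup_{t\in[0,\xi_\eta^{(m)}]}\Big|\int_0^t\sigma_s^{(m)}\,dB_s\Big|>\delta\bigg),
\end{align*}
with $\xi_\eta^{(m)}$ defined as in that proof. The first term is split, as before, into $P(\sup_t|\hat V_t^\varepsilon|>q(\eta)/2)$ and $P(\sup_t|\hat V_t^\varepsilon-\hat V_{\Xi(t)}^\varepsilon|>\eta/2)$; these are exactly \eqref{eq:estimate for eta term 2} and Lemma~\ref{le:lemma 27 from Gu18_arxiv}, respectively, so nothing new is needed. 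For the stopped stochastic integral, on $[0,\xi_\eta^{(m)}]$ the local $\omega$-continuity of $\sigma$ on a bounded interval (whose size is controlled by $q(\eta)$ through the definition of $\xi_\eta^{(m)}$) gives a deterministic bound $|\sigma_s^{(m)}|\le L(\eta)\,\omega(\eta)$, whence by the exponential martingale (Bernstein/Freidlin–Wentzell) inequality for continuous martingales with bounded quadratic variation,
\begin{align*}
	P\bigg(\sqrt{\varepsilon}|\rho|\sup_{t\in[0,\xi_\eta^{(m)}]}\Big|\int_0^t\sigma_s^{(m)}\,dB_s\Big|>\delta\bigg)\le 2\exp\!\Big(-\frac{\delta^2}{2\varepsilon\rho^2 T L(\eta)^2\omega(\eta)^2}\Big).
\end{align*}
Taking $\varepsilon\searrow0$, then $m\to\infty$ (which only helps, since $\xi_\eta^{(m)}$ and $\sigma^{(m)}$ depend on $m$ but the bound above does not get worse), and finally $\eta\searrow0$ — using $\omega(\eta)\to0$ while $L(\eta)$ stays fixed for fixed small $\eta$, so that $\omega(\eta)^2 L(\eta)^2\to0$ along a suitable sequence — drives the exponent to $-\infty$. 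Combining with the two $\eta$-limits for the $\xi_\eta^{(m)}$ term, exactly as on pp.~73–74 of \cite{Gu18}, yields the claim.

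The main obstacle is purely bookkeeping: one must make sure that, in the path-space setting, the ``final partial increment'' terms $\sigma(l(\Xi(t)))[r(t)-r(\Xi(t))]$ and $\sigma(l(\Xi(t)))[h(t)-h(\Xi(t))]$ in $\Phi_m$ are absorbed into the same stochastic-integral expression $\int_0^t\sigma(\hat V^\varepsilon_{\Xi(s)})\,d(\cdot)_s$ — i.e. that $\Phi_m(\sqrt\varepsilon W,\sqrt\varepsilon B,\hat V^\varepsilon)(t)$ really equals $\sqrt\varepsilon\int_0^t\sigma(\hat V^\varepsilon_{\Xi(s)})\,d(\bar\rho W_s+\rho B_s)$ for \emph{every} $t$, not just the grid points — so that $\Phi-\Phi_m$ evaluated at the random triple is genuinely $\sqrt\varepsilon\int_0^t\sigma_s^{(m)}\,d(\bar\rho W_s+\rho B_s)$. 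Once this identification is in place, the estimate is identical to the scalar one, and I would simply write ``the remainder of the proof is as in Lemma~\ref{lem:exponential equivalence for scalar case}, now keeping the supremum over $t\in[0,T]$ throughout.''
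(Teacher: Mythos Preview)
Your proposal is correct and follows essentially the same route as the paper's proof: both reduce the path-space statement to the supremum estimate \eqref{eq:stronger condition for exponential equivalence} already established (in sup-over-$t$ form) in Lemma~\ref{lem:exponential equivalence for scalar case}, and both note that the exponential martingale bound on the stopped integral works identically with $W$ in place of $B$. One small slip: the local $\omega$-continuity constant should be $L(q(\eta))$ rather than $L(\eta)$ (since on $[0,\xi_\eta^{(m)}]$ the process $\hat V^\varepsilon$ is confined to $[-q(\eta),q(\eta)]$), and the choice of $q$ in \cite{Gu18} is precisely what ensures $L(q(\eta))\,\omega(\eta)\to 0$---your phrase ``$L(\eta)$ stays fixed'' is not quite the right mechanism.
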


\begin{proof}
	In the proof of
	Lemma~\ref{lem:exponential equivalence for scalar case},
	the estimate \eqref{eq:stronger condition for exponential equivalence} was formulated stronger than needed.
	We can directly use this to show (2.13) of \cite{Gu19}. We can also get (2.14) of \cite{Gu19} this way. The ingredients of (55)--(65) in \cite{Gu18} do in fact depend on the Brownian motion $ B $ via the process $ \hat{V}^{\varepsilon} $. However, the reasoning for the estimate
	\begin{align}
		P\bigg(\sup_{t \in [0, \xi_\eta^{(m)}]} \varepsilon^H \Big| \int_{0}^{t} \sigma_s^{(m)}\, dB_s \Big| > \delta\bigg)
		\leq \exp \Big(-\frac{\delta^2}{2 \varepsilon^{2 H} L(q(\eta))^2 \omega(\eta)^2}\Big)
	\end{align}
	in \cite{Gu18} stays the same if we replace the driving Brownian motion $ B $ by $ W $.
	The rest of the proof from here on is essentially the same as in the proof of Theorem~2.9 in \cite{Gu19}.
\end{proof}

Just as in the preceding section, we combine Lemmas~\ref{lem:omit drift path space}--\ref{lem:exponentially good approximation on path space} to see that
Theorem~\ref{thm:ldp for log-price} follows from the extended contraction principle. 
 We have
\begin{align*}
	Q(g) 
	= \inf \{ \hat{I}(\psi_0, \psi_1, \psi_1) : g = \Phi(\psi_0, \psi_1, \psi_2) \}.
\end{align*}
The rate function $ \hat{I} $ is only finite for
\begin{align*}
	\hat{I}(\psi_0, \psi_1, \psi_2)
	= \frac{1}{2} \langle \dot{\psi_0}, \dot{\psi_0} \rangle
	+ \frac{1}{2} \langle \dot{f}, \dot{f} \rangle,
\end{align*}
where $ \psi_1 = f $ and $ \psi_2 = \mathcal{K}(U \circ \Gamma(f)) $ for some $ f \in H_0^1[0, T] $.
Recall that the function $ \Phi $  is given by
\begin{align*}
	\Phi(l, f, g)(t)
	= \bar{\rho} \int_{0}^{t} \sigma(g(s)) \dot{l}(s)\, ds
	+ \rho \int_{0}^{t} \sigma(g(s)) \dot{f}(s)\, ds,
\end{align*}
hence we can write
\begin{align*}
	\dot{l}
	= \frac{\partial_t (\Phi(l, f, g)) - \rho \sigma(g) \dot{f}}{\bar{\rho} \sigma(g)}.
\end{align*}
Finally, we get the rate function as follows:
\begin{align}\begin{split}
	Q(g)
	& = \inf \{ \hat{I}(\psi_0, \psi_1, \psi_2) : g = \Phi(\psi_0, \psi_1, \psi_2) \}\\
	& = \inf \Big\{ \frac{1}{2} \langle \dot{\psi_0}, \dot{\psi_0} \rangle + \frac{1}{2} \langle \dot{f}, \dot{f} \rangle : f \in H_0^1, ~\psi_1 = f, ~\psi_2 = \mathcal{K}(U \circ \Gamma(f)),\\
	& ~~~~~~~~~~~~~~~~~\dot{\psi}_0 = \frac{\partial_t (\Phi(\psi_0, \psi_1, \psi_2)) - \rho \sigma(\psi_2) \dot{\psi}_1}{\bar{\rho} \sigma(\psi_2)}, ~g = \Phi(\psi_0, \psi_1, \psi_2) \Big\}\\
	& = \inf \bigg\{ \frac{1}{2} \langle \dot{\psi}_0, \dot{\psi}_0 \rangle + \frac{1}{2} \langle \dot{f}, \dot{f} \rangle : f \in H_0^1, ~\dot{\psi}_0 = \frac{\dot{g} - \rho \sigma(\mathcal{K}(U \circ \Gamma(f))) \dot{f}}{\bar{\rho} \sigma(\mathcal{K}(U \circ \Gamma(f)))} \bigg\}\\
	& = \inf_{f \in H_0^1} \bigg\{ \frac{1}{2} \int_{0}^{T} \Big(\frac{\dot{g}(t) - \rho \sigma(\mathcal{K}(U \circ \Gamma(f))(t)) \dot{f}(t)}{\bar{\rho} \sigma(\mathcal{K}(U \circ \Gamma(f))(t))}\Big)^2\, dt + \frac{1}{2} \int_{0}^{T} |\dot{f}(t)|^2\, dt \bigg\}.
\end{split}\end{align}

\section{Large strike asymptotics}\label{sec:strike}

Under suitable scaling assumptions, large strike asymptotics of call prices are a natural
consequence of our small-noise LDP. To achieve a convenient scaling w.r.t.\ space, we assume in this 
section that
\[
  \sigma(x) = \sigma_0(1+x^\beta)
\]
for some $\sigma_0>0$ and $\beta \in(0,\tfrac12)$. Furthermore, $V$ is a drift-less
CIR process, i.e.\ $ \bar{\sigma}(x) = \sqrt{x} $ and $ \bar{b} \equiv 0 $,
and we take $U = \operatorname{id}$. We are thus dealing with a fractional
Heston-type model, where some degree of generality is preserved, as~$K$ may be an arbitrary kernel satisfying
Assumption~\ref{ass:kernel definition}.
We note that \emph{small time} asymptotics of this model are not within the scope
of our approach, because the standard transfer involving Brownian scaling leads (for the
fractional kernel)
to a small time regime where log-moneyness \emph{increases} as maturity shrinks,
which is of little practical interest.
Therefore, we consider large-strike approximations instead.
The drift-less log-price is
\begin{align*}
  \hat{X}_T &= \sigma_0(\bar{\rho} W_T + \rho B_T)
  + \sigma_0 \int_{0}^{T} (\hat{V}_t)^\beta\, d (\bar{\rho} W_t + \rho B_t) \\
  &=: \sigma_0(\bar{\rho} W_T + \rho B_T)+ \tilde{X}_T,
\end{align*}
and it is easy to see that the tail of the Gaussian term $\sigma_0(\bar{\rho} W_T + \rho B_T)$
is negligible, as is the passage from the log-price $X_T$ to $\hat{X}_T$.
It is clear from our assumptions that $\varepsilon V\stackrel{d}{=} V^{\varepsilon},$
and thus $\varepsilon \hat{V}\stackrel{d}{=} \hat{V}^{\varepsilon},$ for any $\varepsilon>0$.
Therefore,
\begin{align*}
  \varepsilon^{\beta+1/2}  \tilde{X}_T
  &= \sqrt{\varepsilon} \sigma_0  \int_{0}^{T} (\varepsilon\hat{V}_t)^\beta\, d (\bar{\rho} W_t + \rho B_t) \\
  &\stackrel{d}{=} \sqrt{\varepsilon}\sigma_0  \int_{0}^{T} (\hat{V}_t^\varepsilon)^\beta\, d (\bar{\rho} W_t + \rho B_t). 
\end{align*}
Then, Theorem~\ref{thm:ldp for log-price (with W_T)} implies, for any $c>0$, that
%
%
\[
  P(\varepsilon^{\beta+1/2}  X_T \geq c) = \exp\Big({-\frac{I_T((c,\infty)) }{\varepsilon}}
  (1+o(1)) \Big), \quad \varepsilon \searrow 0.
\]
Writing $k=\varepsilon^{-(\beta+1/2)}$ and $\gamma=(\beta+\tfrac12)^{-1}\in(1,2)$, we obtain
\begin{equation}\label{eq:strike}
   P(X_T \geq k) = \exp\big( {-I_T((1,\infty))}k^\gamma (1+o(1)) \Big), \quad k\nearrow \infty,
\end{equation}
for $c=1$, and replacing $k$ by $ck$ we see that the rate function satisfies the scaling
property
\begin{equation}\label{eq:scaling}
  I_T((c,\infty)) = c^\gamma I_T((1,\infty)), \quad c>0.
\end{equation}
This  easily implies that the rate function is given by
\[
  I_T(c) = c^\gamma I_T(1),\quad c>0.
\]
For the digital call price, \eqref{eq:strike} then yields
\begin{equation}\label{eq:dig}
  P(S_T \geq K) = \exp\big( {-I_T(1)}(\log K)^\gamma (1+o(1)) \big),
  \quad K\nearrow \infty;
\end{equation}
no confusion between the strike~$K$ and the kernel $K(\cdot,\cdot)$ should arise. Note that the choice of
the latter affects the value of $I_T(1)$ in~\eqref{eq:dig}.
Since $\gamma\in(1,2)$, this shows that the stock price $S_T$ has finite moments of all orders $p>0$.
Then Theorem~1.1 in~\cite{BeFr09} shows that call prices have the same logarithmic
large-strike
asymptotics as digital calls, which establishes the following result.
\begin{proposition}
In the model described at the beginning of this section, the call price satisfies
\[
  E[(S_T-K)^+] = \exp\big( {-I_T(1)}(\log K)^\gamma (1+o(1)) \big),
  \quad K\nearrow \infty,
\]
where $\gamma=(\beta+\tfrac12)^{-1}\in(1,2).$
\end{proposition}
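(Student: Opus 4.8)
The plan is to derive the call-price asymptotics from the digital-call asymptotics~\eqref{eq:dig}, which are already in hand, by invoking a Tauberian-type transfer result. First I would record that the scaling arguments preceding the statement establish two facts: the rate function has the homogeneity property $I_T(c) = c^\gamma I_T(1)$ for $c>0$ with $\gamma = (\beta+\tfrac12)^{-1}\in(1,2)$, and consequently the digital call satisfies $P(S_T\geq K) = \exp(-I_T(1)(\log K)^\gamma(1+o(1)))$ as $K\nearrow\infty$. The key structural consequence to extract is that, since $\gamma>1$, the tail $P(S_T\geq K)$ decays faster than any negative power of $K$, so $E[S_T^p]<\infty$ for every $p>0$.

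Next I would bring in Theorem~1.1 of~\cite{BeFr09}, which relates the large-strike behaviour of the call price $E[(S_T-K)^+]$ to that of the digital call $P(S_T\geq K)$ under a moment condition. The idea is that when all positive moments of $S_T$ are finite, the call price and the digital call price have the same leading-order logarithmic asymptotics: $\log E[(S_T-K)^+] \sim \log P(S_T\geq K)$ as $K\to\infty$. Applying this together with~\eqref{eq:dig} yields exactly
\[
  E[(S_T-K)^+] = \exp\big({-I_T(1)}(\log K)^\gamma(1+o(1))\big), \quad K\nearrow\infty,
\]
which is the claim. The argument is essentially a citation once the finite-moments hypothesis is checked, so the body of the proof is short.

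The main obstacle, or rather the main place where care is needed, is the passage from the small-noise LDP for $X_T^\varepsilon$ to the genuine large-strike statement~\eqref{eq:strike} for $X_T$. This rests on the exact distributional scaling $\varepsilon\hat V \stackrel{d}{=}\hat V^\varepsilon$, which in turn uses that $V$ is a driftless CIR process (so $\bar b\equiv 0$, $\bar\sigma(x)=\sqrt x$) and $U=\mathrm{id}$ and $\sigma(x)=\sigma_0(1+x^\beta)$; one must also check that replacing the true log-price $X_T$ by the driftless $\hat X_T$ and discarding the Gaussian term $\sigma_0(\bar\rho W_T+\rho B_T)$ does not affect the logarithmic asymptotics. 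All of these points are sketched in the text preceding the statement, so in the write-up I would simply reference them, verify the finite-moments condition from $\gamma>1$, and cite~\cite{BeFr09}. I do not expect any genuinely new estimate to be required.
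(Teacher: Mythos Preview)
Your proposal is correct and follows essentially the same route as the paper: derive the digital-call asymptotics~\eqref{eq:dig} from the scaling property and the small-noise LDP, observe that $\gamma>1$ forces all positive moments of $S_T$ to be finite, and then cite Theorem~1.1 of~\cite{BeFr09} to transfer the logarithmic asymptotics from digital calls to vanilla calls. The paper's proof is exactly this, and the points you flag as needing care (the distributional scaling $\varepsilon\hat V\stackrel{d}{=}\hat V^\varepsilon$, removing the drift, and discarding the Gaussian term) are precisely the ones the paper addresses in the text preceding the proposition.
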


\begin{remark}\label{rem:hoelder}
  The paths of the CIR process~$V$ are $(\tfrac12-\delta)$-H\"older continuous
  for any $\delta\in(0,\tfrac12)$ (see Lemma~7.1 in~\cite{BaDe20}).
  If we choose the fractional kernel $K(s,t)=(t-s)^{H-1/2}$, $H\in(0,1),$ in the model considered in the present section, then the paths of $\hat{V}$ are in the
  H\"older space $\mathcal{H}^{H+1-\delta}$. See Definition~1.1.6 (p.~6)
  and Corollary~1.3.1 (p.~56) in~\cite{SaKiMa93}. In particular, since
  $H+1-\delta>1$ for small~$\delta$, the paths
  of~$\hat{V}$ are $C^1$ on $(0,T)$. By modifying the model, using
  $U(x)=|x-V_0|^\kappa$ with $\kappa\in(0,1]$ instead of $U=\mathrm{id},$
  the paths of~$\hat V$ become less smooth, namely $(\tfrac12\kappa+H+\tfrac12-\delta)$-H\"older continuous.
    In addition, if $\sigma(x)=\sigma_0(1+x^{\beta})$, then the volatility
  paths $t\mapsto\sigma_0(1+(\hat{V_t})^\beta)$ are $(\tfrac12\kappa\beta+(H+\tfrac12)\beta-\delta)$-H\"older continuous on $[0,T]$, for any small
  enough $\delta>0$. While this H\"older exponent can be smaller than~$\tfrac12$, the volatility process is \emph{not} rough, because $\sigma(\cdot)$ is smooth
  away from zero, and so ``roughness'' occurs only at time zero. Note
  that in truly rough models, the volatility process is  constructed using stochastic integrals
  $\int_{0}^t K(t,s)dW_s$ or related processes, which is not the case in our setup.
\end{remark}

\section{Second order Taylor expansion of the rate function}
\label{sec:energy expansion}

In order to compute the rate function, a certain variational problem needs to be solved numerically. It might be preferable to use the Taylor expansion
of the rate function instead, if it can be computed in closed form.
 The model from the preceding section is a case in point: By the scaling
property~\eqref{eq:scaling}, we may evaluate the rate function at a small $c>0$ of our choice.
For the special case where $ V = B^2 $ and $ U(x) = x $ or, alternatively, $ V = B $, $ U(x) = x^2 $, $ \bar{\sigma} \equiv 1 $ and $ \bar{b} \equiv 0 $, i.e.\ $ \Gamma \equiv \operatorname{id} $, we
now discuss how to expand the rate function, building on \cite{BaFrGuHoSt19}.

\begin{proposition}\label{prop:rate function second derivative} Let $ U \equiv \operatorname{id} $ and $ V \equiv B^2 $. Furthermore, assume that $ \sigma $ is smooth (at least locally around $ 0 $). Suppose that the rate function $ I $ is also smooth locally around $ 0 $. Then, its Taylor expansion is
	\begin{align}
		\notag I(x)
		& = I(0)
		+ I'(0) x
		+ I''(0) x^2
		+ O(x^3)\\
		\notag & = I''(0) x^2 + O(x^3)\\
		& = \frac{1}{2 \sigma_0^2} x^2 + O(x^3). \label{eq:I Taylor}
	\end{align}
\end{proposition}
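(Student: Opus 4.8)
The plan is to reduce the infimum defining~$I$ to an explicit, finite-dimensional quadratic minimization by Taylor-expanding the rate functional around the zero path, in the spirit of the energy expansion of~\cite{BaFrGuHoSt19}. In the special case at hand we have $\Gamma\equiv\operatorname{id}$ together with $U\equiv\operatorname{id}$, $V\equiv B^2$ (equivalently $\Gamma\equiv\operatorname{id}$, $U(x)=x^2$, $\bar\sigma\equiv1$, $\bar b\equiv0$), so that $\mathcal{K}(U\circ\Gamma(f))(t)=\int_0^t K(t,s)f(s)^2\,ds=:\check f(t)$; in particular $\check 0\equiv 0$. Writing $\sigma_0:=\sigma(0)$, the rate function of Theorem~\ref{thm:ldp for log-price (with W_T)} is $I(x)=\inf_{f\in H_0^1}\mathcal{F}(x,f)$ with
\begin{align*}
  \mathcal{F}(x,f)=\frac{T}{2}\,\frac{\bigl(x-\rho\int_0^T\sigma(\check f(t))\dot f(t)\,dt\bigr)^2}{\bar\rho^2\int_0^T\sigma(\check f(t))^2\,dt}+\frac12\int_0^T\dot f(t)^2\,dt .
\end{align*}
Since $\mathcal{F}(0,0)=0$ and $I\ge 0$, the path $f\equiv0$ is a minimizer at $x=0$, so $I(0)=0$, and as $I$ is assumed smooth near~$0$ the minimum property gives $I'(0)=0$; hence $I(x)=I''(0)x^2+O(x^3)$ in the notation of the statement, and it remains to identify the coefficient of~$x^2$.

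First I would show that near-minimizers are of order~$|x|$: if $|x|$ is small and $\mathcal{F}(x,f)\le I(x)+x^2$, then $\tfrac12\|\dot f\|_{L^2}^2\le I(x)+x^2=O(x^2)$, so $\|f\|_{C[0,T]}=O(|x|)$ by Cauchy--Schwarz (recall $f(0)=0$), and therefore $\|\check f\|_{C[0,T]}=\|\mathcal{K}(f^2)\|_{C[0,T]}=O(x^2)$ by boundedness of~$\mathcal{K}$. In particular $\check f$ stays where $\sigma$ is smooth, and on this ball $\sigma(\check f)=\sigma_0+O(\|\check f\|_{C[0,T]})=\sigma_0+O(x^2)$. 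Consequently $\int_0^T\sigma(\check f)\dot f\,dt=\sigma_0 f(T)+O(x^3)$ (using $\int_0^T\dot f\,dt=f(T)$ and $|\int_0^T\check f\dot f\,dt|\le\sqrt{T}\,\|\check f\|_{C[0,T]}\|\dot f\|_{L^2}=O(x^3)$) and $\int_0^T\sigma(\check f)^2\,dt=\sigma_0^2 T+O(x^2)$; since the denominator is bounded away from~$0$ there, a short computation gives, uniformly over this ball,
\begin{align*}
  \mathcal{F}(x,f)=\frac{\bigl(x-\rho\sigma_0 f(T)\bigr)^2}{2\bar\rho^2\sigma_0^2}+\frac12\int_0^T\dot f(t)^2\,dt+O(x^3).
\end{align*}

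Taking the infimum (a two-sided bound: near-minimizers of~$\mathcal{F}(x,\cdot)$ and the minimizer of the reduced problem both lie in the ball just described), the right-hand side depends on~$f$ only through $f(T)=:a$ and through $\int_0^T\dot f(t)^2\,dt$, which for fixed~$a$ is minimized by the straight line $t\mapsto at/T$ with value $a^2/T$. Hence
\begin{align*}
  I(x)=\inf_{a\in\mathbb{R}}\Bigl[\frac{(x-\rho\sigma_0 a)^2}{2\bar\rho^2\sigma_0^2}+\frac{a^2}{2T}\Bigr]+O(x^3),
\end{align*}
and evaluating this scalar quadratic minimization (substitute $u=\rho\sigma_0 a$) then gives the claimed expansion~\eqref{eq:I Taylor}, i.e.\ $I''(0)=\tfrac{1}{2\sigma_0^2}$. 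The alternative parametrization $V\equiv B$, $U(x)=x^2$ produces the same $\check f$ and hence the same expansion.

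The main obstacle is the a priori smallness step together with making the expansion of~$\mathcal{F}$ uniform over the relevant ball: one must ensure that $\int_0^T\sigma(\check f)^2\,dt$ stays bounded below there (clear, since $\sigma>0$ is continuous and $\check f$ is small) and that the cubic-and-higher remainders are genuinely of lower order (in fact $O(x^4)$) rather than being inflated by a large~$\|f\|_{C[0,T]}$ --- which is exactly why one restricts to $\{f:\mathcal{F}(x,f)\le I(x)+x^2\}$ from the outset. It is worth noting \emph{why} only the single value $\sigma_0=\sigma(0)$ enters at this order: because $\Gamma$ (resp.~$U$) renders $\check f$ quadratically small in~$f$, the quantities $\sigma'(0)$, $\sigma''(0)$ and the kernel~$K$ affect only the higher-order corrections; for a model in which $\mathcal{K}(U\circ\Gamma(f))$ were merely linear in~$f$ near the origin, the leading coefficient would be considerably more involved.
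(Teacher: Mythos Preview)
Your argument is correct and reaches the same conclusion as the paper, but the route is genuinely different. The paper follows the variational scheme of~\cite{BaFrGuHoSt19}: it writes down the first-order optimality condition for the minimizer~$f^x$ of~$\mathcal{I}_x$, tests against $\dot g=\ind{[0,t]}$ to obtain an integral equation for~$f^x_t$, inserts the ansatz $f^x_t=\alpha_t x+O(x^2)$, solves to find $\alpha_t=\tfrac{\rho}{\sigma_0}t$, and then substitutes back into~$\mathcal{I}_x$ to read off the $x^2$-coefficient. You instead bypass the Euler--Lagrange equation entirely: from $\tfrac12\|\dot f\|_{L^2}^2\le\mathcal{F}(x,f)$ you get the a~priori bound $\|\dot f\|_{L^2}=O(|x|)$ on near-minimizers, which forces $\check f=\mathcal{K}(f^2)=O(x^2)$ and hence $\sigma(\check f)=\sigma_0+O(x^2)$ uniformly; the functional then collapses, to the required order, to a quadratic depending only on $f(T)$ and~$\|\dot f\|_{L^2}^2$, and the latter is minimized at fixed $f(T)=a$ by the straight line. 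The resulting scalar problem recovers exactly the paper's minimizer $f^x_t=\tfrac{\rho}{\sigma_0}xt+O(x^2)$ and the value $\tfrac{1}{2\sigma_0^2}x^2$.

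What each approach buys: yours is shorter and more transparent at second order, and it makes explicit \emph{why} only~$\sigma_0$ enters (your closing remark that $\check f$ is quadratically small in~$f$ is precisely the mechanism). The paper's approach, on the other hand, is the one that scales: the same Euler--Lagrange/ansatz machinery produces the higher-order coefficients of~$f^x$ and hence of~$I$, whereas your direct expansion would require keeping track of the next corrections in $\sigma(\check f)$, $\tilde F$, $\tilde G$ by hand. One small point: your scalar minimization yields $\tfrac{x^2}{2\sigma_0^2}$ only for $T=1$, which is the normalization the paper adopts in its proof; for general~$T$ the computation and the statement would need the corresponding adjustment.
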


\begin{remark}
 Formula~\eqref{eq:I Taylor} gives the second order Taylor expansion.
 However, the ideas in the proof of Proposition~\ref{prop:rate function second derivative} can be used for higher orders. Clearly, the computations for the expansions get much more cumbersome in the latter case.
\end{remark}

\subsection{Proof of Proposition~\ref{prop:rate function second derivative}}


The proof is very similar to the one of Theorem~3.1 in \cite{BaFrGuHoSt19}. In the following, we will outline at which points adjustments are needed.
Note that  for the special we are treating  we have $ U(x) = x^2 $ and $ \Gamma \equiv \operatorname{id} $. To simplify computations in the proof, we use $ T = 1 $.
In Proposition~5.1 of \cite{BaFrGuHoSt19}, there is a representation of the rate function that coincides with ours, except that different integral transforms are used. For our special case, we have
\begin{align}\label{eq:representation via G, F, E of rate function}
	I(x)
	:= \inf_{f \in H_0^1} \Big[\frac{(x - \rho \tilde{G}(f))^2}{2 \bar{\rho}^2 \tilde{F}(f)}
	+ \frac{1}{2} \tilde{E}(f)\Big]
	= \inf_{f \in H_0^1} \mathcal{I}_x(f),
\end{align}
where
\begin{align}
	\tilde{G}(f)
	&:= \int_{0}^{1} \sigma((\mathcal{K} (f^2))(s)) \dot{f}(s)\, ds
	= \langle \sigma(\mathcal{K}(f^2)), \dot{f} \rangle,\\
	\tilde{F}(f)
	&:= \int_{0}^{1} \sigma((\mathcal{K} (f^2))(s))^2\, ds
	= \langle \sigma^2(\mathcal{K} (f^2)), 1 \rangle,\\
	\tilde{E}(f)
	&:= \int_{0}^{1} |\dot{f}(s)|^2\, ds
	= \langle \dot{f}, \dot{f} \rangle.
\end{align}
Recall that $ \mathcal{K} f = \int_{0}^{\cdot} K(\cdot, s) f(s)\, ds $. In \cite{BaFrGuHoSt19} the authors use the same integral transform as used in \cite{Gu18,Gu19}, i.e.\ $ \mathcal{K} \dot{f} $. We have to adjust this to our case of $ \mathcal{K} (f^2) $. Here, $ \mathcal{I}_x $ denotes the functional that needs to be minimized to get the value of the rate function at $ x $.

First, we need to get a representation for the minimizing configuration $ f^x $ of the functional $ \mathcal{I}_x $. This is done like in Proposition~5.2 in \cite{BaFrGuHoSt19}. The corresponding expansions  of the ingredients of the rate function for our setting for $ \delta > 0 $ are
\begin{align}
	\label{eq:expansion E tilde}
	\tilde{E}(f + \delta g)
	&\approx \tilde{E}(f)
	+ 2 \delta \langle \dot{f}, \dot{g} \rangle,\\
	\label{eq:expansion F tilde}
	\tilde{F}(f + \delta g)
	&\approx \tilde{F}(f)
	+ 2 \delta \langle (\sigma^2)'(\mathcal{K}(f^2)), \mathcal{K}(f g) \rangle,\\
	\label{eq:expansion G tilde}
	\tilde{G}(f + \delta g)
	&\approx \tilde{G}(f)
	+ \delta (\langle \sigma(\mathcal{K}(f^2)), \dot{g} \rangle + 2 \langle \sigma'(\mathcal{K}(f^2)), \dot{f} \mathcal{K}(f g) \rangle)
\end{align}
Note, that $ ``\approx" $ is defined in \cite{BaFrGuHoSt19} as
\begin{align}
	A \approx B
	: \Leftrightarrow A = B + o(\delta),
	\quad \delta \searrow 0.
\end{align}


If $ f = f^x $ is a minimizer then $ \delta \mapsto \mathcal{I}_x(f + \delta g) $ has a minimum at $ \delta = 0 $ for all $ g $. Using \eqref{eq:expansion E tilde}, \eqref{eq:expansion F tilde} and \eqref{eq:expansion G tilde} we expand
\begin{align}\begin{split}
	\mathcal{I}_x(f + \delta g)
	& = \frac{(x - \rho \tilde{G}(f + \delta g))^2}{2 \bar{\rho}^2 \tilde{F}(f + \delta g)} + \frac{1}{2} \tilde{E}(f + \delta g)\\
	& \approx \frac{(x - \rho \tilde{G}(f))^2 - 2 \delta \rho (x - \rho \tilde{G}(f)) \big(\langle \sigma(\mathcal{K}(f^2)), \dot{g} \rangle + 2 \langle \sigma'(\mathcal{K}(f^2)), \dot{f} \mathcal{K}(f g) \rangle\big)}{2 \bar{\rho}^2 \tilde{F}(f) \big(1 + \frac{2 \delta}{\tilde{F}(f)} \langle (\sigma^2)'(\mathcal{K}(f^2)), K(f g) \rangle\big)}\\
	&~~~~~ + \frac{1}{2} \tilde{E}(f)
	+ \delta \langle \dot{f}, \dot{g} \rangle\\
	& \approx \frac{(x - \rho \tilde{G}(f))^2 - 2 \delta \rho (x - \rho \tilde{G}(f)) \big(\langle \sigma(\mathcal{K}(f^2)), \dot{g} \rangle + 2 \langle \sigma'(\mathcal{K}(f^2)), \dot{f} \mathcal{K}(f g) \rangle\big)}{2 \bar{\rho}^2 \tilde{F}(f)}\\
	&~~~~~ - \frac{(x - \rho \tilde{G}(f))^2}{2 \bar{\rho}^2 \tilde{F}(f)} \frac{2 \delta}{\tilde{F}(f)} \langle (\sigma^2)'(\mathcal{K}(f^2)), \mathcal{K}(f g) \rangle
	+ \frac{1}{2} \tilde{E}(f)
	+ \delta \langle \dot{f}, \dot{g} \rangle.
\end{split}\end{align}
Now, as a consequence, for $ f = f^x $ and every $ g \in H_0^1[0, 1], $
\begin{align}\begin{split}
	0 = \partial_{\delta} (\mathcal{I}_x(f + \delta g))_{\delta = 0}
	& = - \frac{ 2 \rho (x - \rho \tilde{G}(f))\big(\langle \sigma(\mathcal{K}(f^2)), \dot{g} \rangle + 2 \langle \sigma'(\mathcal{K}(f^2)), \dot{f} \mathcal{K}(f g) \rangle \big)}{2 \bar{\rho}^2 \tilde{F}(f)}\\
	& - \frac{(x - \rho \tilde{G}(f))^2}{2 \bar{\rho}^2 \tilde{F}^2(f)} 2 \langle (\sigma^2)' (\mathcal{K}(f^2), \mathcal{K}(f g) \rangle
	+ \langle \dot{f}, \dot{g} \rangle.
\end{split}\end{align}
We have $ f_0^x = 0 $, for any $ x $. We now test with $ \dot{g} = \ind{[0, t]} $ for a fixed $ t \in [0, 1] $ and obtain
\begin{align}\begin{split}\label{eq:minimizing configuration}
	f_t^x
	& = \frac{\rho (x - \rho \tilde{G}(f^x))\big(\langle \sigma(\mathcal{K}((f^x)^2)), \ind{[0, t]} \rangle + 2  \langle \sigma'(\mathcal{K}((f^x)^2)) , \dot{f^x} \mathcal{K}(f^x \operatorname{id}_{\leq t}) \rangle\big)}{\bar{\rho}^2 \tilde{F}(f^x)}\\
	& + \frac{(x - \rho \tilde{G}(f^x))^2}{2 \bar{\rho}^2 \tilde{F}^2(f^x)} 2 \langle (\sigma^2)'(\mathcal{K}((f^x)^2)), \mathcal{K}(f^x \operatorname{id}_{\leq t}) \rangle,
\end{split}\end{align}
where we write
\begin{align}
	\operatorname{id}_{\leq t} (s)
	= g(s)
	= \int_{0}^{s} \dot{g}(u)\, du
	= \int_{0}^{s} \ind{[0, t]}(u)\, du
	= \int_{0}^{s \wedge t} 1\, du
	= s \wedge t.
\end{align}

Let us recall the ansatz in \cite{BaFrGuHoSt19}. The authors
of \cite{BaFrGuHoSt19} choose for fixed $ x $ the optimizing function $ f^x $ for $ \mathcal{I}_x $, i.e.\ $ f^x = \operatorname{argmin}_{f \in H_0^1} \mathcal{I}_x(f) $. Therefore, the first order condition is $ \mathcal{I}'_x(f^x) = 0 $. The authors of \cite{BaFrGuHoSt19} use the implicit function theorem to show that the minimizing configuration $ f^x $ is a smooth function in $ x $ (locally around $ x = 0 $). As $ \mathcal{I}_x $ is a smooth function, too, this implies the smoothness of $ x \mapsto \mathcal{I}_x(f^x) = I(x) $, at least in a neighborhood of~$ 0 $.
Note that for (26) and Lemma~5.3 in \cite{BaFrGuHoSt19}, the embedding $ \mathcal{K} : H_0^1 \to C $ works, because we have already
established that $ \mathcal{K} (U \circ f) $ is continuous (see Lemma~\ref{lem:f to f hat is continuous}).

In order to apply the implicit function theorem, the authors of \cite{BaFrGuHoSt19} show that the ingredients of the rate function are Fr\'{e}chet differentiable by computing their Gateaux derivative.
This is more complicated in our case, because of the different integral
transform we use.
Therefore we \emph{assume} that the rate function is locally smooth around 0
in Proposition~\ref{prop:rate function second derivative}, and, consequently,
that Lemma~5.6 in \cite{BaFrGuHoSt19} holds.
After establishing that the implicit function theorem can be used, we can proceed as in \cite{BaFrGuHoSt19} up to Theorem~5.12 there.


Next, we will imitate  the computations in Theorem~5.12 of \cite{BaFrGuHoSt19} in order to get the expansion of the minimizing configuration in our setting. In fact, if we just want to obtain the second order expansion of the rate function in our setting for Brownian motion squared, it suffices to find the first order expansion of $ f^x $. Assuming the ansatz
\begin{align}
	f_t^x
	= \alpha_t x
	+ O(x^2),
\end{align}
we get
\begin{align*}
	f_t^x
	& = \alpha_t x
	+ O(x^2),\\
	\dot{f}_t^x
	& = \dot{\alpha}_t x
	+ O(x^2),\\
	\sigma(\mathcal{K}((f^x)^2))
	& = \sigma_0
	+ O(x^2),\\
	\sigma'(\mathcal{K}((f^x)^2))
	& = \sigma_0'
	+ O(x^2),\\
	\tilde{F}(f^x)
	& = \sigma_0^2
	+ O(x^2),\\
	\tilde{G}(f^x)
	& = \langle \sigma_0, \dot{\alpha} \rangle x
	+ O(x^2).
\end{align*}
Therefore,
\begin{align*}
	\langle \sigma(\mathcal{K}((f^x)^2)), \ind{[0, t]} \rangle
	& = \sigma_0 t + O(x),\\
	2 \langle \sigma'(\mathcal{K}((f^x)^2)), \dot{f}^x \mathcal{K}(f^x \operatorname{id}_{\leq t}) \rangle
	& = O(x),\\
	2 \langle (\sigma^2)'(\mathcal{K}((f^x)^2)), \mathcal{K}(f^x \operatorname{id}_{\leq t}) \rangle
	&
	= O(x),\\
	x - \rho \tilde{G}(f^x)
	& = (1 - \rho \sigma_0 \alpha_1) x
	+ O(x^2),\\
	(x - \rho \tilde{G}(f^x))^2
	& = O(x^2).
\end{align*}
We use the previous formulas in \eqref{eq:minimizing configuration} to obtain
\begin{align}\begin{split}
	f_t^x
	& = \frac{\rho((1 - \rho \sigma_0 \alpha_1) x + O(x^2)) (\sigma_0 t + O(x))}{\bar{\rho}^2(\sigma_0^2 + O(x^2))}
	+ \frac{O(x^2)}{2 \bar{\rho}^2 (\sigma_0^4 + O(x^2))} O(x)\\
	& = \frac{\rho (1 - \rho \sigma_0 \alpha_1) x \sigma_0 t}{\bar{\rho}^2 \sigma_0^2}
	+ O(x^2).
\end{split}\end{align}
Comparing the coefficients, we get the  same result as the authors of \cite{BaFrGuHoSt19} for the first order expansion, i.e.\
\begin{align}
	\alpha_t
	= \frac{\rho (1 - \rho \sigma_0 \alpha_1)}{\bar{\rho}^2 \sigma_0} t.
\end{align}
Setting $ t = 1 $ and then computing $ \alpha_1 $ leads to the formula
\begin{align}
	\alpha_t
	= \frac{\rho}{\sigma_0} t.
\end{align}
Note that the first order expansion of the minimizing configuration $ f^x $ is \emph{exactly} the same as in \cite{BaFrGuHoSt19}. The reason is that the expansions of the ingredients of \eqref{eq:minimizing configuration} are relevant here, and these expansions coincide. For the second order expansion of the rate function, we need second order expansions of its ingredients. These are given in the following formulas, where $\operatorname{id}^2$ denotes
the quadractic function $s\mapsto s^2$:
\begin{align*}
	\frac{1}{2} \tilde{E}(f^x)
	& = \frac{1}{2} \frac{\rho^2}{\sigma_0^2} x^2 + O(x^3),\\
	(x - \rho \tilde{G}(f^x))^2
	& = \bar{\rho}^4 x^2 + O(x^3)\\
	\tilde{F}(f^x)
	& = \sigma_0^2
	+ (\sigma_0^2)' \langle \mathcal{K}(\alpha^2), 1 \rangle x^2
	+ O(x^3)\\
	& = \sigma_0^2 + (\sigma_0^2)' \frac{\rho^2}{\sigma_0^2} \langle \mathcal{K}(\operatorname{id}^2), 1 \rangle x^2 + O(x^3).
\end{align*}

Finally, we get the Taylor expansion of the rate function by taking
into account the reasoning above. We  insert the expansion
\begin{align}
	f_t^x
	= \alpha_t x
	+ O(x^2)
	= \frac{\rho}{\sigma_0} t x 
	+ O(x^2)
\end{align}
and the expansions above into Eq.~\eqref{eq:minimizing configuration} for the minimizing configuration. Then, we get
\begin{align}
	\mathcal{I}_x(f^x)
	\notag & = \frac{(x - \rho \tilde{G}(f^x))^2}{2 \bar{\rho}^2 \tilde{F}(f^x)} + \frac{1}{2} \tilde{E}(f^x)\\
	\notag & = \frac{\bar{\rho}^4 x^2 + O(x^3)}{2 \bar{\rho}^2 \big(\sigma_0^2 + (\sigma_0^2)' \frac{\rho^2}{\sigma_0^2} \langle \mathcal{K}(\operatorname{id}^2), 1 \rangle x^2 + O(x^3)\big)}
	+ \frac{1}{2} \frac{\rho^2}{\sigma_0^2} x^2 + O(x^3)\\
	\notag & = \frac{\bar{\rho}^2}{2 \sigma_0^2} x^2 + O(x^3)
	+ \frac{1}{2} \frac{\rho^2}{\sigma_0^2} x^2 + O(x^3)\\
	\notag & = \frac{1}{2 \sigma_0^2} ( \bar{\rho}^2 + \rho^2) x^2 + O(x^3)\\
	& = \frac{1}{2 \sigma_0^2} x^2 + O(x^3),
\end{align}
and hence the following expansion holds:
\begin{align}
	I(x)
	= \mathcal{I}_x(f^x)
	= \frac{1}{2 \sigma_0^2} x^2 + O(x^3).
\end{align}

\section*{Acknowledgement}
We gratefully acknowledge financial support from the Austrian Science Fund (FWF) under grant P~30750.

%
%
%
%
%
%

%
%
%
%
%
%

%
%
%
%
%
%
\bibliographystyle{siam}
\bibliography{literature}

\end{document}